\newtheorem{lem}{Lemma}
\newtheorem{thm}{Theorem}
\newtheorem{defi}{Definition}
\newtheorem{ass}{Assumption}
\newcommand{\limto}{\rightarrow}
\begin{document}

\title{Phase Retrieval via Sensor Network Localization
}


\author{Sherry Xue-Ying Ni \and Man-Chung Yue \and \mbox{Kam-Fung Cheung \and Anthony Man-Cho So}
}

\institute{Sherry Xue-Ying Ni  \at
              Department of Systems Engineering and Engineering Management,\\
The Chinese University of Hong Kong\\
              Tel.: +852~3943-8313\\
              Fax: +852~2603-5505\\
              \email{xyni@se.cuhk.edu.hk}           
           \and
           Man-Chung Yue \at
           Imperial College Business School, \\
           Imperial College London, United Kingdom\\
           \email{m.yue@imperial.ac.uk}
           \and
           Kam-Fung Cheung \at
           Department of Systems Engineering and Engineering Management, \\
           The Chinese University of Hong Kong\\
           \email{kfcheung@link.cuhk.edu.hk}
           \and
           Anthony Man-Cho So \at
            Department of Systems Engineering and Engineering Management, \\
           The Chinese University of Hong Kong\\
           \email{manchoso@se.cuhk.edu.hk}           
}

\date{Received: date / Accepted: date}

\maketitle

\begin{abstract}
The problem of phase retrieval is revisited and studied from a fresh perspective. In particular, we establish a connection between the phase retrieval problem and the sensor network localization problem, which allows us to utilize the vast theoretical and algorithmic literature on the latter to tackle the former. Leveraging this connection, we develop a two-stage algorithm for phase retrieval that can provably recover the desired signal. In both sparse and dense settings, our proposed algorithm improves upon prior approaches simultaneously in the number of required measurements for recovery and the reconstruction time. We present numerical results to corroborate our theory and to demonstrate the efficiency of the proposed algorithm. As a side result, we propose a new form of phase retrieval problem and connect it to the complex rigidity theory proposed by Gortler and Thurston~\cite{gortler2014generic}.
\end{abstract}

\section{Introduction}
\subsection{Phase Retrieval Problem}
The problem of phase retrieval consists of recovering a signal vector $x\in \mathbb{C}^n$ from phaseless intensity measurements of the form 
\begin{equation}\label{eq:measurement}
|\langle x, \phi_m\rangle|^2=b_m, ~{m=1,2,\ldots,M},
\end{equation}
where for each $m=1,\dots,M$, $b_m\in\mathbb{R}_+$ is the observed output of the intensity measurement associated with a given measurement vector $\phi_m\in\mathbb{C}^n$. A collection $\Phi = \{\phi_m \}_{m=1}^M$ of measurement vectors is called an ensemble. Throughout the paper, we focus on the setting where we can freely design these measurement vectors $\Phi$. As we will see, the design of the ensemble $\Phi$ is of utmost importance to the recovery procedure. Note that for any unit-modulus complex number $e^{i\theta}$, the vector $e^{i\theta}x$ yields the same measurements. Therefore, we could recover the signal $x$ only up to the equivalence relation $\sim$ given by 

$$x\sim y~{\rm if}~{\rm and}~{\rm only}~{\rm if}~ y = e^{i\theta} x~{\rm for}~{\rm some}~ \theta\in \mathbb{R}.$$ 
Let $\mathbb{C}^n\!/\!\! \sim$ be the set of equivalence classes induced by the equivalence relation $\sim$ and denote by $\mathcal{A}_{\Phi}:\mathbb{C}^n\!/\!\! \sim\rightarrow \mathbb{R}_+$ the intensity map associated with the ensemble $\Phi = \{\phi_m \}_{m=1}^M$; i.e., \begin{equation}
\left(\mathcal{A}_{\Phi}(x) \right)_m = |\langle x, \phi_m\rangle|^2,\quad m=1,\dots,M.
\end{equation}
For simplicity, we will write $\mathcal{A}$ in place of $\mathcal{A}_\Phi$ when the ensemble $\Phi$ is clear from the context.

The phase retrieval problem has a long history and received great attention due to its vast modelling power in many areas. Fields of applications include X-ray and crystallography imaging \cite{harrison1993phase}, quantum optics \cite{mirhosseini2014compressive}, astronomy \cite{fienup1987phase}, acoustics \cite{balan2010signal}, and microscopy \cite{miao2008extending}. For more discussions on the history, applications, and recent developments of phase retrieval, we refer the readers to the excellent surveys~\cite{L17,SECCMS15}.

\subsection{Related Work}
Over the past few decades, the phase retrieval problem has been extensively studied in the literature. A popular approach in practice is to use the so-called error reduction-type algorithms. Algorithms that fall into this class, including the famous Gerchberg-Saxton algorithm~\cite{gerchberg1972practical} and the Fienup algorithm~\cite{fienup1978reconstruction,fienup1982phase,fienup1987phase}, are essentially alternating projection-type algorithms \cite{L17}. The advantage of this approach is its relatively low computational complexity and flexibility in incorporating prior knowledge of the signal into the recovery process. Unfortunately, this approach often lacks provable convergence guarantees and suffers from the issues of multiple stationary points and instability, especially with non-convex priors \cite{L17}. Another weakness is that the number of measurements required when using these algorithms is not known a priori, though some efforts to remedy this have been made in \cite{netrapalli2013phase}. In particular, Netrapalli et al. \cite{netrapalli2013phase} studied a version of the alternating projection method for the phase retrieval problem. Their algorithm recovers the signal using $\mathcal{O}(n \log ^3 n \log(1/\epsilon))$ intensity measurements and has computational complexity $\mathcal{O}(n^2\log n (\log^2 n+\log^2\frac{1}{\epsilon}\log \log \frac{1}{\epsilon}))$. However, this is still far from explaining the empirical success of error reduction-type algorithms and a rigorous mathematical foundation for this approach remains elusive.

Another recent approach is based on semidefinite programming and convex relaxation. The basic idea of this approach is to interpret quadratic measurements~\eqref{eq:measurement} as linear measurements of a rank-one matrix ${X}={x}{x}^{H}$. Then, the phase retrieval problem can be equivalently rewritten as a rank-minimization problem. Subsequently, by using a convex surrogate such as the trace norm to replace the rank function, we obtain a semidefinite program that can be solved in polynomial-time by off-the-shelf solvers. PhaseLift proposed by Cand\`es et al. \cite{candes2013phaselift} and PhaseCut by Waldspurger et al. \cite{waldspurger2015phase} are examples of such an approach. The drawback of this approach is its high computational complexity. Indeed, the complexities of PhaseLift and PhaseCut to return a solution of $\epsilon$ accuracy are $\mathcal{O}(n^3/\epsilon^2)$ and $\mathcal{O}(n^3/\sqrt{\epsilon})$, respectively. Both methods use an ensemble of 
$\mathcal{O}(n \log n)$ i.i.d. standard $n$-dimensional Gaussian random vectors. Assuming the signal $x$ is $s$-sparse (i.e., $x$ has at most $s$ non-zero components), the $\ell_1$-regularized version of PhaseLift \cite{li2013sparse} improves the number of required Gaussian measurements to $\mathcal{O}(s^2\log n)$. Nonetheless, this algorithm again requires solving a semidefinite program and hence has a similar computational complexity as PhaseLift and PhaseCut. Therefore, this approach is not applicable to large-scale phase retrieval problems in practice.

Other approaches usually involve construction of special matrices. In \cite{iwen2015fast}, Iwen et al. constructed block circulant measurement matrices that can be block diagonalized. By constructing certain invertible block circulant matrices, one can express the available squared magnitudes as a system of linear measurements, thereby recovering the signal. The approach reduces computational complexity to $\mathcal{O}(n(\log^3 n\log^3(\log n)))$; whereas the ensemble size is still as large as $\mathcal{O}(n\log^2 n\log^3(\log n))$ to guarantee unique recovery with high probability. In a recent work \cite{candes2015phase}, a non-convex approach based on Wirtinger flow was introduced to extract phase information from fewer random measurements. The number of measurements and complexity of this algorithm are both $\mathcal{O}(n\log n)$. In \cite{pedarsani2017phasecode}, the authors studied the general compressive phase retrieval problem with sparsity $s$. They developed a novel approach based on a sparse-graph coding framework and can recover a random fraction of non-zero components with $14s$ measurements and complexity $\Theta(s)$. Nevertheless, their method is only capable of correctly recovering part of the non-zeros entries and they still require $4s-o(s)$ measurements.

There have also been endeavours to understand the minimum size of an ensemble so that the measurements uniquely determine the signal up to the equivalence relation; i.e., the intensity map $\mathcal{A}$ is injective. Towards that end, Bandeira et al. \cite{bandeira2014saving} conjectured that $4n-4$ generic measurement vectors are both necessary and sufficient for the injectivity of the intensity map and showed in the same paper that the conjecture is true when $n=2$ and $n=3$. This conjecture is now known as the $4n-4$ conjecture. The sufficiency was proved by Fickus et al. in \cite{fickus2014phase}. One such ensemble consisting of $4n-4$ deterministic measurements was constructed in \cite{pohl2013phase} via a low rate sampling method. Unfortunately, the necessary part of the conjecture is false---an ensemble $\hat{\Phi}$ of $11$ $4$-dimensional measurement vectors whose intensity map $\mathcal{A}_{\hat{\Phi}}$ is injective was constructed in \cite{vinzant2015small}.


\subsection{Our Approach and Main Contributions}
Our work sets out with the interesting observation that the phase retrieval problem can be seen as a sensor network localization problem. More precisely, each component of the signal ${x}$ can be viewed as a point (which we will refer to as \emph{sensors}) in $d$-dimensional Euclidean space, where $d=1$ for real signals and $d=2$ for complex signals. If we explicitly design intensity measurements to form edges joining these sensors, then determining $x$ can be viewed as localizing the sensors in space. Furthermore, if the underlying graph generated by these measurements satisfies certain rigidity property, then each entry of $x$ can be uniquely determined up to the equivalence relation. In this work, we will design a deterministic measurement ensemble that fulfils the rigidity requirement and aim to construct an injective mapping with as few measurements as possible. 

Our contribution is threefold. First, we establish a connection between the phase retrieval problem and the well-studied sensor network localization problem. This allows us to use the tools from rigidity theory and graph realization to bear on the phase retrieval problem. In particular, we propose a two-stage algorithm to uniquely recover all phases up to the equivalence relation $\sim$ with few measurements. Concretely, we design a deterministic ensemble such that the underlying graph generated by intensity measurements is a $d$-lateration graph that is univerally rigid. By rigidity theory and relevant results in \cite{zhu2010universal}, we can easily obtain provable guarantee for unique recovery. For the non-sparse phase retrieval problem, our proposed ensemble consists of only $3n-2$ measurements and the corresponding intensity map is injective. For the sparse case where there are at most $s$ non-zero components, the number of measurements is further reduced to $n+2s-2$. Injectivity of our mapping is demonstrated by theoretical analysis. The algorithm is easy to implement and allows parallel computation. Simulations further demonstrate its efficacy and superiority over benchmark approaches in terms of efficiency. The computational complexity scales only linearly with $n$. To the best of our knowledge, this is the first work to study the phase retrieval problem by incorporating results from rigidity theory. Second, our ensemble design yields an injective intensity map of minimal size, and we provide explicit constants for the number of measurements. Last but not least, we propose a new variant of the phase retrieval problem and connect it to the complex rigidity theory proposed by Gortler and Thurston \cite{gortler2014generic}.

It should be pointed out that our approach does not constitute a counter-example to the necessary part of the $4n-4$ conjecture. In particular, the conjecture claims that the ensemble of any injective intensity map is of size at least $4n-4$. The injectivity is understood as a map on the whole $\mathbb{C}^n\!/\!\!\sim$, whereas our proposed algorithm provably recovers the correct signal with an additional minor assumption on the true signal. Nevertheless, as we will see in Section~\ref{sec:novelapp}, our method fails only for those signals that have its first two components collinear with the origin. We also remark that a different ensemble of the same size appeared in an unpublished manuscript \cite{pedarsani2014phasecode}. However, they did not provide motivations and insights for their ensemble, and the injectivity of the corresponding intensity map is not clear. 

\subsection{Organization}
The remainder of the paper is organized as follows. Section \ref{sec:snl} is devoted to revisiting the theory of sensor network localization and graph rigidity, which constitutes the fundamental basis for our approach. 
Section \ref{sec:novelapp} focuses on our novel approach to the phase retrieval problem, including a rigidity-theoretic two-stage algorithmic framework applied to the phase retrieval problem and a theoretical analysis to demonstrate the injectivity of the measurement ensemble generated by our algorithm. 
We then provide numerical results in Section \ref{sec:sim} to validate our theoretical findings, where we compare against three methods in the literature; namely, the Fienup algorithm~\cite{fienup1982phase}, the Wirtinger flow algorithm~\cite{candes2015phase}, and PhaseCut~\cite{waldspurger2015phase}. In Section \ref{sec:CVPR}, we study the complex rigidity theory and its connection to complex-measurement-based phase retrieval problems. Finally, we conclude the paper in Section \ref{sec:cln}.

Throughout the paper, the vectors are column vectors unless specified otherwise; ${e_j}$ denotes the $j$-th standard coordinate basis vector of suitable dimension; $(\cdot)^T$ and $(\cdot)^H$ denote the transpose and Hermitian transpose, respectively; $\langle\cdot,\cdot\rangle$ refers to the inner product of vectors; $\Re(\cdot)$ and $\Im(\cdot)$ denote the real and imaginary parts of a complex number or vector, respectively.

\section{Sensor Network Localization}\label{sec:snl}
In this section, we review the sensor network localization problem and a graph rigidity-theoretic approach to tackling it. Then, we present a novel connection between the phase retrieval problem and the sensor network localization problem. Such a connection allows us to utilize powerful results in rigidity theory to design the measurement ensemble that yields a minimal-size injective intensity map.

\subsection{Rigidity Theory and Sensor Network Localization}
The problem of sensor network localization is among the classic topics in signal processing and arises when one is interested in determining the positions of nodes in a network from a set of measurements. 
We do not attempt to give a comprehensive review, but only highlight the crucial findings of the relationship between unique localizability and graph rigidity theory. 

To begin, let us give a formal definition of the sensor network localization problem. Consider a network that consists of a number of \emph{anchor} nodes whose positions are known, together with a number of \emph{sensor} nodes whose locations are to be estimated. Let $d$ be the dimension of the Euclidean space in which these nodes reside. Let $G=(V,E)$ be the given network, where $V$ and $E$ denote the vertex set and the edge set of the graph, respectively. Without loss of generality, we assume that $G$ is connected. The vertices can be partitioned into two categories: the set $V_s=\{1,\ldots,n\}$ of sensors, and the set $V_a=\{n+1,\ldots,n+m\}$ of anchors. In particular, the positions of anchors are given by the vector $u\in\mathbb{R}^{dm}$. For the sake of clarity, we define three subsets of $E$, namely $E_{aa}$, $E_{sa}$, and $E_{ss}$, which are defined as $E_{aa}=\{(i,j)\in E: i,j\in V_a\}$, $E_{sa}=\{(i,j)\in E: i\in V_s, j\in V_a\}$, and $E_{ss}=\{(i,j)\in E: i,j\in V_s\}$, respectively. For $(i,j)\in E_{aa}$, the distances are trivially known; for $(i,j)\in E_{sa}$ or $(i,j)\in E_{ss}$, the distances are acquired by applying measurements. The distances between the nodes are represented by positive weights assigned to the edges, namely $r_{ij}$ for $(i,j)\in E_{sa}$ and $\tilde{r}_{ij}$ for $(i,j)\in E_{ss}$. For simplicity, we assume that all the measured data are noiseless. Let $r\in\mathbb{R}^{| E_{sa}|}$ and $\tilde{r}\in\mathbb{R}^{|E_{ss}|}$ be the collection of distance measurements. Then, an instance of the sensor network localization problem is given by $(G, (r,\tilde{r}),u, d)$. The objective is to find a position assignment $x\in\mathbb{R}^{nd}$ to the sensor nodes such that the following system is satisfied: 
\begin{align*}
\|x_i-x_j\|_2 &= r_{ij}, ~{\rm for}~(i,j)\in E_{sa};\\
\|x_i-x_j\|_2 &= \tilde{r}_{ij}, ~{\rm for}~(i,j)\in E_{ss};\\
x_i&\in\mathbb{R}^d , ~{\rm for}~i=1,\ldots,n.
\end{align*}
Herein, the pair $(x, u)$, which represents the positions of all nodes in space, is called a \emph{localization} of $G$. One interesting question in this setup is whether and when the sensor positions $x$ can be uniquely determined. If an instance admits a unique localization in $\mathbb{R}^d$, we say that it is \emph{uniquely localizable}. Eren et al. \cite{eren2004rigidity} utilized tools from rigidity theory to discuss the connection between unique localizability and properties of the associated network. In particular, they stated that following theorem.
\begin{thm}[Unique Localizability $\&$ Global Rigidity\cite{eren2004rigidity}]
For any $d\geq 1$, a generic sensor network localization instance is {uniquely localizable} if and only if its associated network $G=(V,E)$ is {globally rigid}. 
\end{thm}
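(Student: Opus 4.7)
The plan is to reduce the unique localizability of the sensor network instance $(G,(r,\tilde r),u,d)$ to the global rigidity of an augmented framework in which the anchor positions act as a pinning that kills the trivial isometry group. I would first fix some notation: let $\hat G=(V_s\cup V_a,\hat E)$ be the graph obtained from $G$ by inserting every edge $(i,j)$ with $i,j\in V_a$ (their lengths are known by definition), and let $\hat p=(x,u)\in\mathbb{R}^{d(n+m)}$ be the underlying configuration. Then a position assignment $x'$ satisfies the localization system if and only if the framework $(\hat G,(x',u))$ is edge-length equivalent to $(\hat G,\hat p)$.

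For the implication \emph{global rigidity $\Rightarrow$ unique localizability}, I would take any feasible $x'$ and apply the definition of global rigidity to conclude that $(x',u)$ is congruent to $(x,u)$ via some Euclidean isometry $T$. Because the anchor block $u$ appears unchanged on both sides of $T$, the isometry $T$ must fix every anchor position. Under the standing genericity assumption on the instance, one can assume that the anchors contain $d+1$ points in general affine position (this is the only place where the genericity hypothesis is genuinely used beyond its role in rigidity), so $T$ is forced to be the identity and hence $x'=x$.

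For the converse \emph{unique localizability $\Rightarrow$ global rigidity}, I would argue by contrapositive. If $\hat G$ is not globally rigid at $\hat p$, there exists a configuration $(x',u')$ equivalent but not congruent to $(x,u)$. Composing with an isometry $S$ that sends $u'$ back to $u$ (possible because $u$ is a rigid sub-configuration once the anchor-anchor edges are included), I produce a second feasible localization $(S(x'),u)$. The non-congruence assumption translates, after genericity has ruled out accidental coincidences, into $S(x')\neq x$, contradicting unique localizability.

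The main obstacle I expect is not the topological argument itself but the careful treatment of the genericity hypothesis and the role of the anchor set. Two subtle points need to be handled: (i) one must justify that the given anchor set suffices to remove the whole Euclidean isometry group, which is automatic only when at least $d+1$ anchors are in general position, and (ii) one must distinguish between global rigidity of the abstract graph at a generic point versus global rigidity of a specific instance. Invoking the Gortler--Healy--Thurston characterization (or the combinatorial results cited in the paper's Section~\ref{sec:snl}) one shows that for generic $\hat p$, global rigidity is a property of the combinatorial graph $\hat G$, so the equivalence in the theorem is well-posed and the above sketch goes through.
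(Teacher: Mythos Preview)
The paper does not supply its own proof of this theorem: it is quoted verbatim from Eren et al.\ \cite{eren2004rigidity} and stated without argument, serving purely as background for the subsequent discussion of universal rigidity and $d$-lateration graphs. Consequently there is no in-paper proof to compare your proposal against.

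That said, your sketch is essentially the standard argument behind the cited result, and the two subtle points you flag are exactly the right ones. A couple of refinements would tighten it. First, in the direction \emph{global rigidity $\Rightarrow$ unique localizability}, you assume the anchors contain $d+1$ points in general affine position; in Eren et al.\ this is actually part of the construction of the ``associated network'' (one augments $G$ by a clique on the anchors and assumes the anchors affinely span $\mathbb{R}^d$), so it is a standing hypothesis rather than a consequence of genericity of the sensor positions. You should state it as such rather than deriving it from genericity of the instance. Second, in the contrapositive direction, the step ``compose with an isometry $S$ sending $u'$ back to $u$'' is fine, but you should say explicitly why $S(x')\neq x$: if $S(x')=x$ then $(S(x'),u)=(x,u)$, hence $(x',u')$ is congruent to $(x,u)$ via $S^{-1}$, contradicting the choice of $(x',u')$. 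Finally, your closing appeal to Gortler--Healy--Thurston (that generic global rigidity is a graph property) is helpful context but not strictly needed for the equivalence as stated, since the paper's definition of global rigidity is already a framework-level notion tied to the specific localization $p$.
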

In graph theory, a graph $G=(V,E)$ with $p$ being its localization in $\mathbb{R}^n$ is called \emph{globally rigid} if $p$ is the unique (up to congruence) localization of $G$ in $n$-dimensional Euclidean space. Nevertheless, it has been shown that even if an instance satisfies the global rigidity property, the problem of estimating the postions is still intractable in general~\cite{Saxe79}. To overcome the barrier, So and Ye introduced the notion of unique $d$-localizability in \cite{so2007theory}, while Zhu et al. applied the notion of universal rigidity to strengthen the connection\cite{zhu2010universal}. In particular, a generic sensor network localization instance is called \emph{uniquely $d$-localizable} if it admits a unique localization in \emph{any} Euclidean space with dimension $\ell \ge d$. A graph $G=(V,E)$ with $p$ being its localization in $\mathbb{R}^n$ is called \emph{universally rigid} if $p$ is the unique (up to congruence) localization of $G$ in \emph{any} Euclidean space. The connection between unique $d$-localizability and universal rigidity is presented below. For a rigorous proof, readers can refer to \cite[Theorem 2]{zhu2010universal}.
\begin{thm}[Unique $d$-Localizability $\&$ Universal Rigidity\cite{zhu2010universal} ]
For any $d\geq1$, a generic sensor network localization instance is {uniquely $d$-localizable} if and only if its associated network $G=(V,E)$ is {universally rigid}. 
\end{thm}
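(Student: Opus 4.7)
The plan is to exploit the distinguishing role of the anchors: in the SNL formulation the anchor positions $u$ are pinned down, whereas the universal rigidity notion is intrinsically defined only up to congruence. Under genericity, the anchors affinely span $\mathbb{R}^d$ (in particular, there are at least $d+1$ of them in general affine position), so any rigid motion of a higher-dimensional ambient space that fixes every anchor must restrict to the identity on $\mathbb{R}^d$. This observation is what bridges ``unique up to congruence'' (universal rigidity of $G$) and ``exactly unique'' (unique $d$-localizability of the SNL instance).

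First I would formalize a common framework: view the putative localization $p=(x,u)\in\mathbb{R}^{(n+m)d}$ as a localization of the full graph $G=(V_s\cup V_a,\,E_{aa}\cup E_{sa}\cup E_{ss})$, taking the inter-anchor distances derived from $u$ as additional edge lengths. Both the SNL instance and the rigidity question are then statements about the same pair $(G,p)$, differing only in whether a subset of vertices is required to stay fixed in space.

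For the backward direction, assume $G$ is universally rigid with localization $p$. Fix $\ell\ge d$ and suppose $(x',u)$ is any localization of the SNL instance in $\mathbb{R}^\ell$, where the anchor block equals $u$ under the natural embedding $\mathbb{R}^d\hookrightarrow\mathbb{R}^\ell$. Then $(x',u)$ is also a localization of $G$ in $\mathbb{R}^\ell$, so universal rigidity provides a rigid motion $T$ of $\mathbb{R}^\ell$ with $T(p)=(x',u)$. Since $T$ fixes every anchor and the anchors affinely span $\mathbb{R}^d$, $T$ is the identity on the affine hull of $u$, which in turn contains the sensor coordinates $x\in\mathbb{R}^d$. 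Hence $x'=x$, giving unique $d$-localizability.

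For the forward direction, assume unique $d$-localizability and let $q=(x',u')$ be any localization of $G$ in some $\mathbb{R}^\ell$, $\ell\ge d$. Because $u'$ and $u$ have the same pairwise distances (forced by the edges in $E_{aa}$), there exists a rigid motion $T$ of $\mathbb{R}^\ell$ with $T(u')=u$. Then $T(q)=(T(x'),u)$ is a valid localization of the SNL instance in $\mathbb{R}^\ell$, so by unique $d$-localizability $T(q)=p$ and therefore $q$ is congruent to $p$. This shows $G$ is universally rigid.

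The main obstacle I anticipate is ensuring that genericity is invoked soundly at two places: in the backward direction, to guarantee that the anchors affinely span $\mathbb{R}^d$ (so that fixing them collapses all congruences to the identity on the sensor coordinates); and in the forward direction, to ensure that $E_{aa}$ carries enough distance information to pin down $u$ up to congruence inside $\mathbb{R}^\ell$, so that the auxiliary motion $T$ actually exists. Both points hinge on standard general-position arguments for generic configurations and mirror the treatment in~\cite[Theorem~2]{zhu2010universal}.
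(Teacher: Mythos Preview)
The paper does not contain its own proof of this theorem. Immediately after stating it, the authors write ``For a rigorous proof, readers can refer to \cite[Theorem 2]{zhu2010universal}'' and move on. There is therefore nothing in the paper to compare your argument against.

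That said, your sketch is essentially the standard argument one finds in the cited reference: use a rigid motion to align the anchors, then invoke the affine span of the anchors to kill the residual congruence freedom. The two caveats you flag are exactly the ones that require care. In the forward direction, note that $E_{aa}$ as defined in the paper is merely the set of anchor--anchor edges that happen to lie in $E$, not the complete graph on $V_a$; the usual remedy (and the one in \cite{zhu2010universal}) is to work with the augmented graph in which the anchor set is made into a clique, which is harmless since all anchor positions---hence all pairwise anchor distances---are known. In the backward direction, you need not only that the anchors affinely span $\mathbb{R}^d$ but also that a rigid motion of $\mathbb{R}^\ell$ fixing a $d$-dimensional affine set need not be the identity on all of $\mathbb{R}^\ell$; it is, however, the identity on that affine set, and this is precisely what you use since the sensor coordinates $x$ lie in $\mathbb{R}^d$. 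Both points are handled by the genericity hypothesis in the manner you indicate.
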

Although universal rigidity is more restrictive than global rigidity, it still captures a host of networks. Examples of universally rigid graphs include complete graphs and $d$-lateration graphs. The latter notion is defined as follows.
\begin{defi}[$d$-lateration Graph\cite{aspnes2006theory}]\label{def:trig}
Let $d,n\geq 1$ be integers with $n\geq d+1$. Then, an $n$-vertex graph $G=(V,E)$ is called a {$d$-lateration graph} if there exists an ordering $\{1,2,\ldots,n\}$ of the vertices in $V$ such that (i) the first $d+1$ vertices $1,2,\ldots, d$ form a complete graph; (ii) every vertex $j\geq d+1$ is connected to at least $d+1$ of the vertices $1,2,\ldots, j-1$. 
 \end{defi}
In particular, a sensor network localization instance is uniquely $d$-localizable if its associated network is a $d$-lateration graph. The proof is given in \cite[Theorem 3]{zhu2010universal}. Next, we will apply this proposition to design the measurement ensemble for phase retrieval.

\subsection{Sensor Network Localization and Phase Retrieval}
In this section, we consider the phase retrieval problem from a fresh perspective. In particular, we look at the problem through the lens of the sensor network localization problem. Concretely, each component of the signal ${x}$ can be regarded as a sensor in $d$-dimensional Euclidean space, where $d=1$ for real signals and $d=2$ for complex signals; while the origin can be viewed as an anchor. Since we assume that the measurement vectors can be designed freely, we restrict our attention to measurement vectors of the forms 
\begin{equation} \label{eq:measure-form}
\mbox{$\phi_k=e_k$ (where $1\le k\le n$) and $\tilde{\phi}_{jk}=e_j-e_k$ (where $1 \le j<k\le n$)}.
\end{equation}
The former yields the distance between sensor $k$ and the origin (i.e., $|x_k|$), while the latter yields the distance between sensors $j$ and $k$ (i.e., $|x_j-x_k|$). Thus, by choosing different subsets of measurement vectors from~\eqref{eq:measure-form}, we obtain different instances of the sensor network localization problem. Now, consider an instance of the sensor network localization problem constructed according to the above recipe, and let $G$ be the underlying graph. 
Based on results in the previous section, if $G$ is universally rigid, then the instance admits a unique localization in any Euclidean space. More precisely, all sensors are uniquely determined up to congruence in space, which implies unique recovery of the reconstructed signal $x$. We are thus motivated to construct an ensemble with as few measurement vectors from~\eqref{eq:measure-form} as possible, and yet the graph $G$ induced by these measurements is universally rigid. We will discuss how this can be achieved in the next section.
 
\section{A Rigidity-Theoretic Approach to Phase Retrieval}\label{sec:novelapp}
Let $x=(x_1,\ldots,x_n)$ be the signal vector that we wish to recover. To implement the idea in Section 2.2, we construct a graph $G=(V,E)$, where the vertex set $V$ is given by $V=\{0,x_1,\ldots,x_n\}$ (here, we use $x_i$ to denote both the label of the vertex and its location in space) and the edge set $E$ is obtained using the following procedure, so that $G$ is a $d$-lateration graph ($d=1$ if $x$ is a real signal and $d=2$ if $x$ is a complex signal).
\begin{enumerate}
  \item Choose $d+1$ nodes from $V$ as anchors and form a complete graph.
  \item Consider the remaining nodes as sensors. For each sensor node, construct $d+1$ edges connecting the sensor to all the anchors. 
\end{enumerate}
Since $G$ is universally rigid by construction, once the measurements corresponding to the edges of $G$ are available, the locations of the vertices are uniquely determined and so is the target signal vector. We now propose a rigidity-theoretic two-stage algorithm to actually recover the target signal vector.
We will first illustrate the idea for real signals and then extend it to complex ones. 
\subsection{Real Phase Retrieval}\label{subsubsec:real}
Consider the case where the signal $x=(x_1,\ldots,x_n)$ we wish to recover is real; i.e., $x_i \in \mathbb{R}$ for $i=1,\ldots,n$. In this case, we have $d=1$.
By Definition \ref{def:trig}, two anchors are required in order to construct the $1$-lateration graph. As we can specify the origin as an anchor, we only need to specify one more anchor. Towards that end, we measure the magnitude of each entry of $x$, thereby creating an edge between the origin and every other vertex. Let $j_1$ be the smallest index such that $|x_{j_1}| = w > 0$. Note that $x_{j_1}$ can be placed at either $w$ or $-w$. We fix the vertex $x_{j_1}$ at $w$ and specify it as an anchor. Next, we take the measurements $|x_i - x_{j_1}|$ for all $i \not= j_1$ and $|x_i| \not=0$, thereby creating an edge between $x_i$ and $x_{j_1}$. It is straightforward to verify from the definition that the resulting graph is a 1-lateration graph and hence is universally rigid. The target signal can then be recovered by simple calculations. The entire recovery procedure is summarized in Algorithm 1.
\begin{algorithm}[H]
\caption{\bf Real Phase Retrieval\label{alg1}}
  \begin{enumerate}
  \item Take the measurements $w_j=|\langle e_j,x \rangle|^2$ for $j=1,\dots,n$.
  \item Determine the indices $j_1,\dots,j_s$ of the non-zero entries and the sparsity $s$.
  \item Fix $x_{j_1}$ at $w_{j_1}$. 
  Treat the origin and $x_{j_1}$ as two anchors.
  \item For $k=2,\ldots,s$, take the measurements $w_{{j_k}1}=|\langle e_{j_k}-e_{j_1}, x\rangle |^2$ and solve for each $x_{j_k}$ by
      $$x_{j_k}=\frac{|\langle{ e}_{j_1},x\rangle|^2+|\langle{ e}_{j_k},x\rangle|^2-|\langle{ e}_{j_k}-{ e}_{j_1},x\rangle|^2}{2 x_{j_1}}.$$ \vspace{-5mm}
  \end{enumerate}
\end{algorithm}
For the non-sparse case, we can recover the signal up to reflection using the following $2n-1$ deterministic measurements: $$\Phi=\{{ e}_i\}_{i=1}^{n}\cup\{{ e}_j-{e}_1\}_{j=2}^{n}.$$
The size of our constructed ensemble coincides with the size that is necessary for successful recovery in the real case; 
see \cite[Theorem 4]{bandeira2014saving}. For the sparse case with sparsity $s$ ($s\leq n$), the deterministic ensemble
$$\Phi=\{{e}_i\}_{i=1}^{n}\cup \{{e}_{j_k}-{e}_{j_1}\}_{k=2}^{s}$$
yields an injective mapping with a size of $n+s-1$. Remarkably, the computational complexity of Algorithm \ref{alg1} is only $\Theta (n)$, which achieves the best order when compared with other methods in literature. 

\subsection{Complex Phase Retrieval}\label{subsubsec:cplx}
Since complex signal reconstruction is more common in practice, we now aim to extend Algorithm \ref{alg1} to the complex case. Recall that our task is to recover an $n$-dimensional complex signal from the measurements~\eqref{eq:measurement}. With the sensor network localization interpretation, complex phase retrieval amounts to localizing sensors on the plane; hence $d=2$ in this case. Three anchors, including the origin, are required to construct the $2$-lateration graph. A natural idea is to try the following direct extension of Algorithm \ref{alg1}.
\begin{algorithm}[H]
\caption{\bf Complex Phase Retrieval (Preliminary Idea)\label{alg2}}
  \begin{enumerate}
  \item Take the measurements $w_j=|\langle e_j,x \rangle|^2$ for $j=1,\dots,n$.
  \item Determine the indices $j_1,\dots,j_s$ of the non-zero entries and the sparsity $s$.
  \item Treat $x_{j_1},x_{j_2}$ together with the origin as three anchors. Localize $x_{j_1},x_{j_2}$ by another intensity measurement $z_1=|\langle e_{j_1}-e_{j_2},x \rangle|^2$.
  \item For $k=3,\dots,s$, take the measurements $w_{{j_k}1}=|\langle e_{j_k}-e_{j_1},x \rangle|^2$, $w_{{j_k}2}=|\langle e_{j_k}-e_{j_2},x \rangle|^2$ and solve for each $x_{j_k}$.
  \end{enumerate}
\end{algorithm}
Algorithm 2 yields a deterministic ensemble of small size and has a computational complexity of $\Theta(n)$.
However, we encounter a non-uniqueness issue when determining the artificial anchors. Recall that $x\sim y$ if and only if $y = e^{i\theta}x$ for some $\theta\in\mathbb{R}$. Let $\sim_w$ be the equivalence relation on $\mathbb{C}^n$ defined by 
$x \sim_w y$ if and only if $y=e^{\theta i}x $ or $ y=e^{\theta i}\bar{x}$ for some $\theta\in \mathbb{R}$. The equivalence relation $\sim$ captures \emph{isometry up to rotation}, whereas $\sim_w$ captures \emph{isometry up to rotation and reflection}. One may easily see that the artificial anchors can only achieve uniqueness up to rotation, but not both rotation and reflection. Therefore, even if the map $\mathcal{A}_{\Phi}^{w}:\mathbb{C}^n/\sim_w\rightarrow \mathbb{R}^M$ is injective, the map $\mathcal{A}_{\Phi}:\mathbb{C}^n/\sim\rightarrow \mathbb{R}^M$ is not guaranteed to be injective. Such deficiency cannot simply be resolved by adding more distance measurements. 

To tackle this issue, we now introduce another two measurements. The additional measurement vectors determine the relative phase between the two artificial anchors, thus eliminating the reflection ambiguity. The refined procedure is given in Algorithm \ref{alg3}. Our algorithm works under the following mild assumption.
\begin{ass}
 The first two non-zero entries of $x$ are not collinear with 0.
\end{ass}
\begin{algorithm}[H]
\caption{\bf Two-Stage Complex Phase Retrieval\label{alg3}}\vspace{2mm}
{\bf \ Stage 1: Building Artificial Anchors (without Reflection Ambiguity)}
  \begin{enumerate}
  \item Take the measurements $w_j=|\langle e_j,x \rangle|^2$ for $j=1,\dots,n$.
  \item Determine the indices $j_1,\dots,j_s$ of the non-zero entries and the sparsity $s$.
  \item Treat $x_{j_1},x_{j_2}$ together with the origin as three anchors. \textcolor{black}{Localize $x_{j_1},x_{j_2}$ up to rotation only by another two measurements
      $z_1=|\langle e_{j_1}+e_{j_2},x \rangle|^2$ and $z_2=|\langle e_{j_1}-ie_{j_2},x \rangle|^2$.}
  \end{enumerate}
  {\bf \ Stage 2: Localizing the Sensors}
  \begin{enumerate}
  \setcounter{enumi}{3}
  \item For $k=3,\dots,s$, take the measurements $w_{{j_k}1}=|\langle e_{j_k}-e_{j_1},x \rangle|^2$, $w_{{j_k}2}=|\langle e_{j_k}-e_{j_2},x \rangle|^2$ and solve the system \eqref{system_2} to recover $x_{j_k}$.
  \end{enumerate}
\end{algorithm}
Note that the ensemble used in Algorithm \ref{alg3} is $$\{e_j\}_{j=1}^n\cup\{e_{1}+e_{2},e_{1}-ie_{2}\}\cup\{e_{k}-e_{1}\}_{k=3}^{n}\cup\{e_{k}-e_{2}\}_{k=3}^{n}$$ for the non-sparse case, and is $$\{e_j\}_{j=1}^n\cup\{e_{j_1}+e_{j_2},e_{j_1}-ie_{j_2}\}\cup\{e_{j_k}-e_{j_1}\}_{k=3}^{s}\cup\{e_{j_k}-e_{j_2}\}_{k=3}^{s}$$ for the sparse case. The former has size $3n-2$, while the latter has size $n+2s-2$. 
Next, we will provide theoretical analysis to demonstrate the injectivity of our ensemble design. The proof of Theorem \ref{thm:inj} follows immediately from Lemmas \ref{lmm:stage1} and \ref{lmm:stage2}.
\begin{thm}[Unique Recovery of Algorithm \ref{alg3}]\label{thm:inj}
Suppose that the first two non-zero entries of $x$ are not collinear with 0. Then, $x$ can be exactly recovered by Algorithm~\ref{alg3} up to global phase; i.e., up to the equivalence relation $\sim$. 
\end{thm}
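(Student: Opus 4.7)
The plan is to establish Theorem~\ref{thm:inj} by separately analyzing the two stages of Algorithm~\ref{alg3} (matching the two lemmas alluded to in the statement) and then combining them.

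For Stage~1, I would fix a representative of the equivalence class by normalizing $x_{j_1} = \sqrt{w_{j_1}}$ to be real and positive, which is admissible since we only claim recovery up to $\sim$. Expanding the two extra measurements in coordinates yields
\begin{align*}
z_1 - w_{j_1} - w_{j_2} &= 2\,\Re(x_{j_1}\overline{x_{j_2}}) \;=\; 2\sqrt{w_{j_1}}\,\Re(x_{j_2}),\\
z_2 - w_{j_1} - w_{j_2} &= -2\,\Im(x_{j_1}\overline{x_{j_2}}) \;=\; 2\sqrt{w_{j_1}}\,\Im(x_{j_2}),
\end{align*}
so both $\Re(x_{j_2})$ and $\Im(x_{j_2})$ are uniquely determined. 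The non-collinearity hypothesis forces $\Im(x_{j_2}) \neq 0$ under this normalization, which certifies that the three anchors $\{0, x_{j_1}, x_{j_2}\}$ form a non-degenerate triangle in $\mathbb{R}^2$ --- precisely the base condition required by the $2$-lateration construction of Section~\ref{subsubsec:cplx}.

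For Stage~2, with the three anchors in hand, I would show that each remaining sensor $x_{j_k}$ ($k \geq 3$) is uniquely localized from the three distance measurements $w_{j_k}$, $w_{j_k 1}$, $w_{j_k 2}$. Expanding
\begin{align*}
w_{j_k 1} &= w_{j_k} + w_{j_1} - 2\,\Re(x_{j_k}\overline{x_{j_1}}),\\
w_{j_k 2} &= w_{j_k} + w_{j_2} - 2\,\Re(x_{j_k}\overline{x_{j_2}})
\end{align*}
gives a $2\times 2$ linear system in the unknowns $(\Re(x_{j_k}), \Im(x_{j_k}))$ whose coefficient matrix has determinant proportional to $x_{j_1}\,\Im(x_{j_2})$, which is nonzero by Stage~1. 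Hence $x_{j_k}$ is recovered uniquely, and assembling Stages~1 and~2 yields the claim. Alternatively, one can view Stage~2 as an instance of the uniquely $2$-localizable sensor network of Section~\ref{sec:snl}: the underlying graph is a $2$-lateration graph by construction, so its universal rigidity (via the cited result of Zhu et al.) immediately guarantees a unique localization in $\mathbb{R}^2$.

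The main obstacle is conceptual rather than computational: one must explain why the two carefully chosen measurements $z_1$ and $z_2$ in Stage~1 defeat the reflection ambiguity that the single distance $|x_{j_1}-x_{j_2}|^2$ would leave intact (as noted in the discussion preceding Algorithm~\ref{alg3}). The insight is that replacing one of the unit vectors by $-i\,e_{j_2}$ rotates the corresponding anchor by $90^\circ$ in the complex plane, so that $z_2$ probes the imaginary part of the cross term $x_{j_1}\overline{x_{j_2}}$ --- exactly the piece of information lost to complex conjugation when only real-weighted combinations are used. Once this observation is made, the rest of the proof reduces to elementary linear algebra applied to the $2$-lateration structure.
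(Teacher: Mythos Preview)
Your proposal is correct and follows essentially the same route as the paper: both split into Stage~1 (normalize $x_{j_1}$ to be real positive and solve explicitly for $\Re(x_{j_2}),\Im(x_{j_2})$ from $z_1,z_2$) and Stage~2 (a $2\times 2$ linear system in $(\Re(x_{j_k}),\Im(x_{j_k}))$ whose determinant $a_1b_2-a_2b_1$ is nonzero by the non-collinearity hypothesis). The only extra ingredient in the paper's Lemma~\ref{lmm:stage1} is an abstract verification of Stage~1 injectivity via the super-analysis operator and a criterion of Bandeira et al.\ before arriving at the same explicit formulas; note also a sign slip in your $z_2$ expansion---the correct identity is $z_2-w_{j_1}-w_{j_2}=-2\sqrt{w_{j_1}}\,\Im(x_{j_2})$, matching the paper's $b_2=(w_1+w_2-z_2)/(2\sqrt{w_1})$.
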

\begin{lem}[Stage 1 Correctness]\label{lmm:stage1}
Given the measurement ensemble $\Phi_1=\{e_j\}_{j=1}^n\cup\{e_{j_1}+e_{j_2},e_{j_1}-ie_{j_2}\}$ in Stage 1, the artificial anchors $x_{j_1}$ and $x_{j_2}$ are uniquely determined up to $\sim$.
\end{lem}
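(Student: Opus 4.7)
The plan is to decode the four scalar measurements $w_{j_1}, w_{j_2}, z_1, z_2$ into the two magnitudes $|x_{j_1}|, |x_{j_2}|$ together with the full complex product $\overline{x_{j_1}} x_{j_2}$, and then verify that this packet of data pins down the ordered pair $(x_{j_1}, x_{j_2})$ uniquely modulo a single global phase rotation.

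The first step is a polarization calculation. Expanding $z_1 = |x_{j_1} + x_{j_2}|^2$ and $z_2 = |x_{j_1} - i x_{j_2}|^2$ directly yields
\begin{align*}
z_1 &= w_{j_1} + w_{j_2} + 2\,\Re(\overline{x_{j_1}} x_{j_2}), \\
z_2 &= w_{j_1} + w_{j_2} + 2\,\Im(\overline{x_{j_1}} x_{j_2}),
\end{align*}
so that $\Re(\overline{x_{j_1}} x_{j_2}) = (z_1 - w_{j_1} - w_{j_2})/2$ and $\Im(\overline{x_{j_1}} x_{j_2}) = (z_2 - w_{j_1} - w_{j_2})/2$ are each an explicit closed-form function of the measurements. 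Hence $\overline{x_{j_1}} x_{j_2}$ is recovered in full.

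The second step handles uniqueness up to $\sim$. Because $j_1, j_2$ are, by construction, the indices of the first two non-zero entries of $x$, we have $x_{j_1}, x_{j_2} \neq 0$. Let $(y_{j_1}, y_{j_2})$ be any other candidate pair producing the same four measurements. Then $|y_{j_1}| = |x_{j_1}| \neq 0$ forces $y_{j_1} = e^{i\theta} x_{j_1}$ for some $\theta \in \R$. Substituting this into $\overline{y_{j_1}} y_{j_2} = \overline{x_{j_1}} x_{j_2}$ and dividing by the non-zero $\overline{x_{j_1}}$ gives $y_{j_2} = e^{i\theta} x_{j_2}$, so $(y_{j_1}, y_{j_2}) \sim (x_{j_1}, x_{j_2})$.

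The main hurdle is conceptual rather than technical: one must understand why two auxiliary measurements are genuinely needed instead of the single one used in the preliminary Algorithm 2. A lone measurement of $|x_{j_1}+x_{j_2}|^2$ recovers only $\Re(\overline{x_{j_1}} x_{j_2})$ and thus leaves a sign ambiguity in $\Im(\overline{x_{j_1}} x_{j_2})$, which is precisely the reflection ambiguity flagged earlier in the paper. The design choice of the second measurement vector $e_{j_1} - i e_{j_2}$ is tailored to extract the imaginary part of $\overline{x_{j_1}} x_{j_2}$ and kill this residual ambiguity. Once this is appreciated, the rest is short algebra, and I foresee no further serious obstacle beyond tracking the signs in the polarization identities carefully.
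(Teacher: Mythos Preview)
Your argument is correct and takes a slightly more elementary route than the paper. The paper first establishes injectivity abstractly: it introduces the super-analysis operator $A_\Phi : \mathcal{H}^2 \to \mathbb{R}^4$ given by $(A H)_j = \langle H, \phi_j \phi_j^* \rangle$, checks that it has trivial kernel, and then invokes a result of Bandeira et al.\ to conclude that $\mathcal{A}_\Phi$ is injective; only afterward does it fix the gauge $x_{j_1} \in \mathbb{R}_{>0}$ and write down explicit recovery formulas. You bypass this machinery entirely by recovering $|x_{j_1}|$, $|x_{j_2}|$, and the full complex product $\overline{x_{j_1}} x_{j_2}$ directly via polarization, and then arguing uniqueness up to a common phase from these data alone. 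Your approach is self-contained and avoids the external citation; the paper's approach has the virtue of situating the lemma within the standard phase-retrieval injectivity framework.

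One small slip to correct: with a Hermitian inner product, the measurement $z_2 = |\langle e_{j_1} - i e_{j_2}, x \rangle|^2$ equals $|x_{j_1} + i x_{j_2}|^2$, not $|x_{j_1} - i x_{j_2}|^2$. Hence $z_2 = w_{j_1} + w_{j_2} - 2\,\Im(\overline{x_{j_1}} x_{j_2})$ and $\Im(\overline{x_{j_1}} x_{j_2}) = (w_{j_1} + w_{j_2} - z_2)/2$, the negative of what you wrote. This does not affect the logic of your uniqueness argument, only the sign in the explicit formula.
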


\begin{proof}
For simplicity, we assume that the first $s$ entries $x_1,\dots,x_s$ of $x$ are non-zero and the remaining entries are all zeros; i.e. $j_k=k$ for $k=1,\dots,s$. Our aim in this stage is to recover $x_1$ and $x_2$ up to a common phase shift. We achieve this by considering the ensemble $$\Phi=\left\lbrace\phi_1=e_1,\phi_2=e_2,\phi_3=\begin{pmatrix}1\\1\end{pmatrix},\phi_4=\begin{pmatrix}1\\-i\end{pmatrix}\right\rbrace.$$ The advantage of using this ensemble is twofold. First, it allows us to easily establish the injectivity of the induced intensity map $\mathcal{A}_{\Phi}:\mathbb{C}^2/\sim\rightarrow\mathbb{R}^4$. Second, with the measurements given by this ensemble, the reconstruction of $x_1$ and $x_2$ is almost trivial. To establish the injectivity of $\mathcal{A}_{\Phi}$, consider the so-called super-analysis operator $A_\Phi:\mathcal{H}^2\rightarrow\mathbb{R}^4$ given by $(AH)_j=\langle H,\phi_j\phi_j^*\rangle$ for $j=1,\dots,4$. It is easy to show that $$AH=\begin{pmatrix}H_{11}\\H_{22}\\H_{11}+H_{22}+H_{12}+H_{21}\\H_{11}+H_{22}+iH_{12}-iH_{21}\end{pmatrix}=\begin{pmatrix}H_{11}\\H_{22}\\H_{11}+H_{22}+2\Re{(H_{12})}\\H_{11}+H_{22}-2\Im{(H_{12})}\end{pmatrix}$$ and $AH=\textbf{0}\in\mathbb{R}^4$ if and only if $H$ is a zero matrix. Hence, $A_\Phi$ is injective. In particular, there is no matrix in the null space of $A_\Phi$ that is of rank 1 or 2. By a result of Bandeira et al. \cite{bandeira2014saving}, $\mathcal{A}_\Phi$ is injective. To recover $x_1$ and $x_2$, note that we have $w_1=|x_1|^2$, $w_2=|x_2|^2$, and
\begin{equation*}
\begin{cases} z_1 = |x_1+x_2|^2 = |x_1|^2+|x_2|^2+2\Re(x_1^*x_2),\\
z_2 = |x_1+ix_2|^2 = |x_1|^2+|x_2|^2-2\Im(x_1^*x_2). \end{cases}
\end{equation*}
Denoting $x_j=a_j+ib_j$ for $j=1,2$ and using the definition of $w_1$ and $w_2$, we have
\begin{equation*}
\begin{cases} z_1 = w_1+w_2+2(a_1a_2+b_1b_2),\\ z_2 = w_1+w_2-2(a_1b_2-a_2b_1). \end{cases}
\end{equation*}
Without loss of generality, we can assume that $x_1$ is a positive real; i.e. $a_1>0$ and $b_1=0$. Thus, we have
\begin{equation}\label{ab}
\begin{cases} a_1=\sqrt{w_1},\\b_1=0,\\ a_2=\frac{z_1-w_1-w_2}{2\sqrt{w_1}},\\ b_2=\frac{w_1+w_2-z_2}{2\sqrt{w_1}}. \end{cases}
\end{equation}
This proof is completed.\qed
\end{proof}

\begin{lem}[Stage 2 Correctness]\label{lmm:stage2}
Given the fixed anchors and the measurement ensemble $\Phi_2=\{e_j\}_{j=1}^n\cup\{e_{j_k}-e_{j_1}\}_{k=3}^{s}\cup\{e_{j_k}-e_{j_2}\}_{k=3}^{s}$ in Stage 2, the locations of the sensors $\{x_{j_k}\}_{k=3}^s$ are uniquely determined.
\end{lem}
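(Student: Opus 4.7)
\medskip
\noindent\textbf{Proof proposal for Lemma \ref{lmm:stage2}.} The plan is to reduce the claim to a standard $2$-dimensional trilateration argument in the plane, using the three anchors $0$, $x_{j_1}$, $x_{j_2}$ fixed in Stage~1. Identifying $\mathbb{C}$ with $\mathbb{R}^2$, the construction in Section~\ref{sec:novelapp} guarantees that, for each $k\ge 3$, the measurements $w_{j_k}$, $w_{j_k 1}$, $w_{j_k 2}$ in $\Phi_2$ correspond exactly to the squared distances from the unknown sensor $x_{j_k}$ to the three anchors $0$, $x_{j_1}$, $x_{j_2}$. Since the resulting underlying graph is by construction a $2$-lateration graph with $\{0, x_{j_1}, x_{j_2}\}$ as the initial complete triangle and every other vertex connected to all three anchors, one route is simply to invoke the universal rigidity of $d$-lateration graphs (Definition~\ref{def:trig} together with \cite[Theorem~3]{zhu2010universal}) to conclude that each $x_{j_k}$ is uniquely determined.

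For a more self-contained and constructive argument, I would instead exhibit the recovery explicitly. Expanding
\[
|x_{j_k}-x_{j_1}|^2 = |x_{j_k}|^2 - 2\,\Re(\bar{x}_{j_1}x_{j_k}) + |x_{j_1}|^2
\]
and the analogous identity with $j_2$, and using the already-known values of $w_{j_k}=|x_{j_k}|^2$, $|x_{j_1}|^2$, $|x_{j_2}|^2$, one obtains closed-form expressions for $\Re(\bar{x}_{j_1}x_{j_k})$ and $\Re(\bar{x}_{j_2}x_{j_k})$. Writing $x_{j_k}=a+ib$ and $x_{j_\ell}=a_\ell+ib_\ell$ for $\ell=1,2$, these two equations become the real linear system
\[
\begin{pmatrix} a_1 & b_1 \\ a_2 & b_2 \end{pmatrix}
\begin{pmatrix} a \\ b \end{pmatrix}
= \begin{pmatrix} c_1 \\ c_2 \end{pmatrix},
\]
whose right-hand side is determined by the measurements and Stage~1 data.

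The main (and essentially only) obstacle is to check that this $2\times 2$ system has a unique solution; equivalently, that its determinant $a_1 b_2 - a_2 b_1 = \Im(\bar{x}_{j_1} x_{j_2})$ is nonzero. This is exactly the statement that the anchors $x_{j_1}$ and $x_{j_2}$ are not real scalar multiples of each other, i.e. that $0$, $x_{j_1}$, $x_{j_2}$ are not collinear. But this is guaranteed by Assumption~1, which is in force throughout Algorithm~\ref{alg3}, together with the fact that Stage~1 fixes $x_{j_1}$ and $x_{j_2}$ up to a common global phase (Lemma~\ref{lmm:stage1}), a transformation that preserves non-collinearity with the origin. Thus the system is invertible, $(a,b)$, and hence $x_{j_k}$, is uniquely recovered for each $k=3,\dots,s$, completing the proof.
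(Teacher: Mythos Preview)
Your proposal is correct and takes essentially the same approach as the paper: expand the squared distances to obtain a $2\times 2$ real linear system for the real and imaginary parts of $x_{j_k}$, and use Assumption~1 (non-collinearity of $0,x_{j_1},x_{j_2}$) to conclude that the coefficient matrix has nonzero determinant $a_1b_2-a_2b_1$. The only addition is your mention of the alternative rigidity-theoretic route via \cite[Theorem~3]{zhu2010universal}, which the paper does not invoke in the proof itself but which is consistent with the surrounding discussion.
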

\begin{proof}
Again, we assume that $x_1,\dots,x_s$ are the non-zero entries of $x$ and the remaining entries are all zeros. Let $x_1=a_1+ib_1$ and $x_2=a_2+ib_2$ be the anchors obtained in Stage 1 with $a_1,a_2,b_1,b_2$ defined by \eqref{ab}. Our goal is to uniquely determine $x_3,\dots,x_s$. Towards that end, recall that for $j=3,\ldots,s$, we have the measurements
\begin{equation}\label{system_2}
\begin{cases} w_{j}=|x_j|^2,\\w_{j1}=|x_j-x_1|^2, \\ w_{j2}=|x_j-x_2|^2. \end{cases}
\end{equation}
Denote $x_j=a_j+ib_j$ for $j=3,\dots,s$. Then,
\begin{equation*}
\begin{cases} w_{j1}=w_{j}+w_1-2\Re(x_j^*x_1)=w_{j}+w_1-2(a_1a_j+b_1b_j), \\ w_{j2}=w_{j}+w_2-2\Re(x_j^*x_2)=w_{j}+w_2-2(a_2a_j+b_2b_j). \end{cases}
\end{equation*}
In particular, for each $j$, we have the following system of 2 equations in 2 unknowns: 
\begin{equation}\label{system_1}
\begin{cases} a_1a_j+b_1b_j=\frac{1}{2}(w_1+w_{j}-w_{j1}), \\ a_2a_j+b_2b_j=\frac{1}{2}(w_2+w_{j}-w_{j2}). \end{cases}
\end{equation}
By assumption, the three points $0$, $x_1$, and $x_2$ are not collinear on $\mathbb{R}^2$. Therefore, we have $b_1/a_1\neq b_2/a_2 \Leftrightarrow a_1b_2-a_2b_1\neq 0$ and thus the solution to \eqref{system_1} is unique if it admits any solution at all. Since the true signal is feasible to this system, this completes the proof of the correctness of Stage 2.\qed
\end{proof}

\subsubsection{Remarks}
As we have pointed out in the Introduction, our algorithm does not provide a counter-example to the necessary part of the $4n-4$ conjecture since it requires an extra assumption that the first two entries of the target signal are not collinear with the origin. It is also worth noting that our method requires the least number of measurements known to date for both the real and complex scenarios and is still guaranteed to recover the target signal vector. Moreover, our method is extremely easy to implement.

\section{Simulation Results}\label{sec:sim}
In this section we provide numerical simulations to demonstrate the efficacy of our proposed algorithm. We compare our approach against three algorithms: Fienup Algorithm\cite{fienup1982phase}, Wirtinger Flow\cite{candes2015phase}, and PhaseCut\cite{waldspurger2015phase}. All simulations are implemented using MATLAB R2017a (version: 9.2.0.538062) on a computer running Windows 10 with an Intel i5-6000 CPU (with four 3.30 GHz processors) and 8 GB of main memory. The experimental setting is as follows. We uniformly sample $n$ complex-valued sensors within the square block $[-\frac{1}{2}, \frac{1}{2}] + i[-\frac{1}{2}, \frac{1}{2}]$. The number of measurements, $m$, is set to be $3n - 2$ for our approach, $6n$ for Fienup Algorithm, $4.5n$ for Wirtinger Flow, and $4n$ for PhaseCut. The maximum number of iterations is set to be $2,500$ for Fienup Algorithm, Wirtinger Flow, and PhaseCut. The step size of Fienup Algorithm is set to be $0.5$.
\begin{figure}[t]
 \vspace{-1.2cm}
  \centerline{\includegraphics[width=12cm]{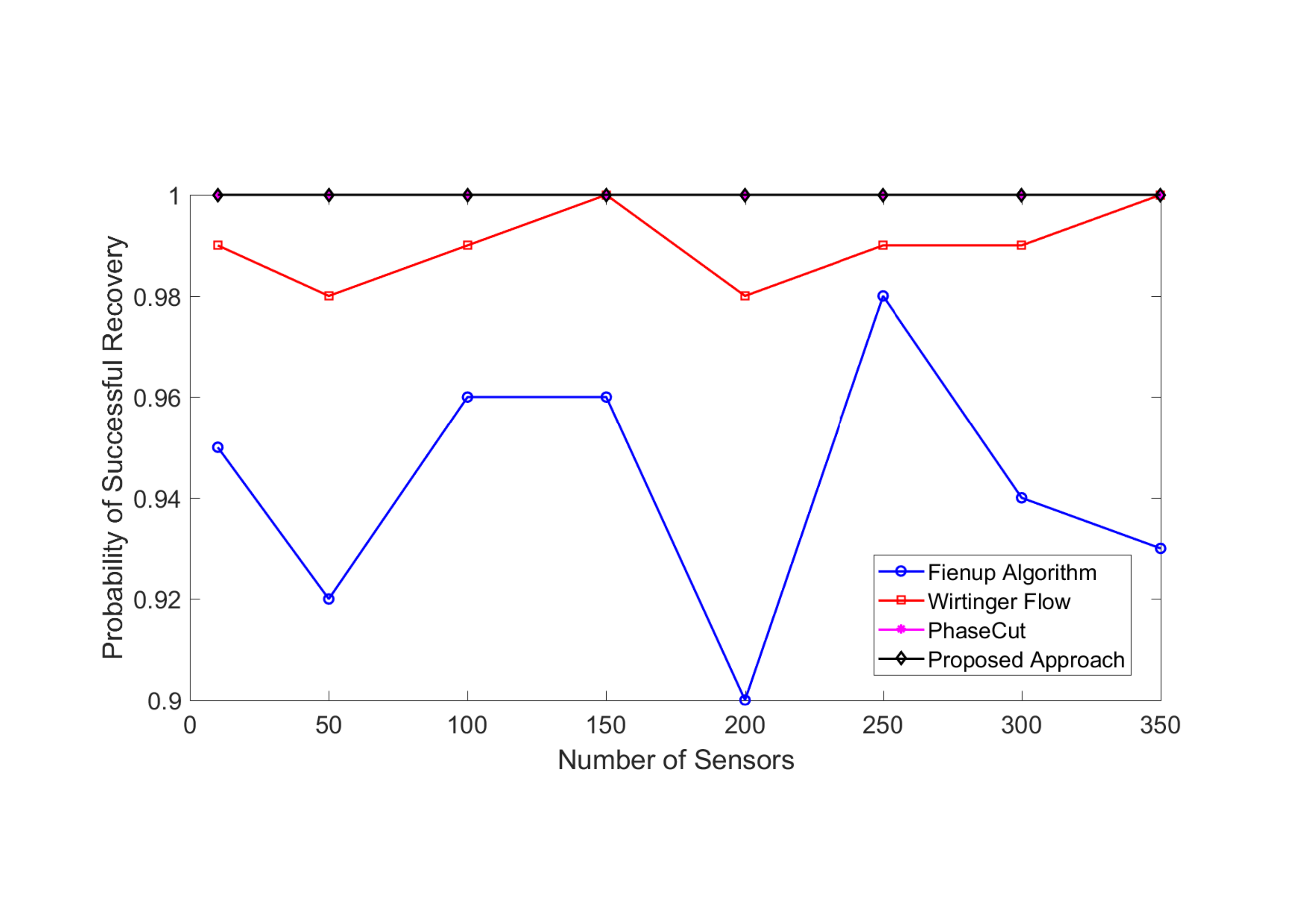}}
   \vspace{-1.2cm}
   \centering
  \caption{Probability of successful recovery in noiseless environment.}
	\label{fig:prob_success}
	\vspace{-1cm}
		\centering
	\includegraphics[width=12cm]{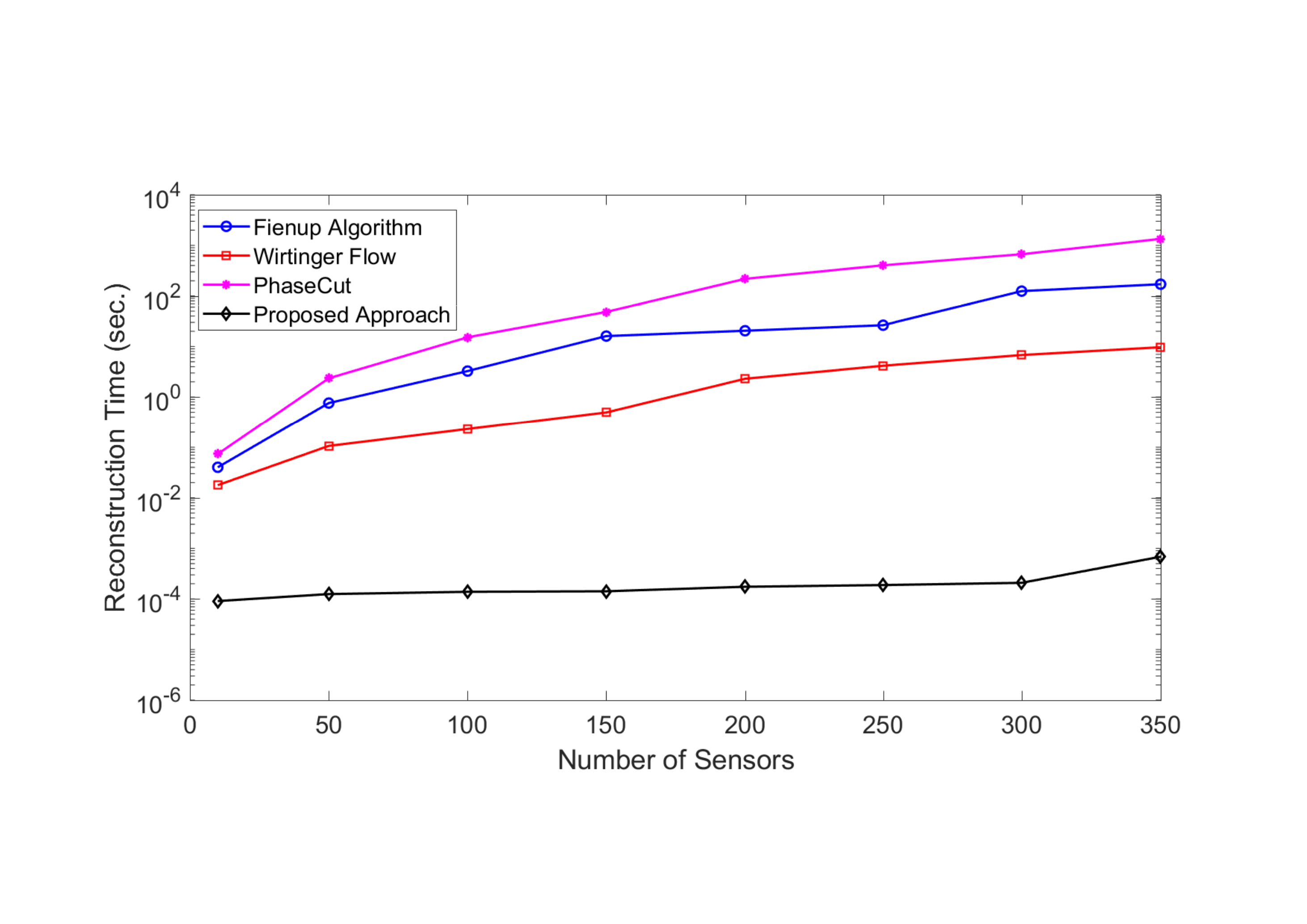}
	  \vspace{-1.2cm}
	  \centering
	\caption{Reconstruction time in noiseless environment.}
	\label{fig:efficiency}
\end{figure}

To investigate the effectiveness of our approach, we first compare the probability of successful recovery in the noiseless case. Specifically, we conduct the simulations using different number of sensors, where $n$ is specified in the set $\{10, 50, 100, 150, 200, 250, 300, 350\}$. For each $n$, we use 100 realizations of the sensors to obtain the figures. Figure~\ref{fig:prob_success} shows that both our approach and PhaseCut can recover all sensors. This is due to the closed-form solution in our approach and the refinement by using the Gerchberg-Saxton algorithm \cite{gerchberg1972practical} in PhaseCut. By contrast, neither the Fienup Algorithm nor Wirtinger Flow can recover all sensors as they require an initial guess of the unknown sensors. 

In Figure~\ref{fig:efficiency}, we report the average reconstruction time of each algorithm. The plot shows that our approach can recover the sensors substantially faster than other algorithms. This is again due to the closed-form solution in our approach. By contrast, other algorithms need to recursively update the approximated solutions until convergence or they terminate when the maximum number of iterations is reached. In particular, PhaseCut applies the Gerchberg-Saxton algorithm\cite{gerchberg1972practical} to refine the initial guess and lift up the dimension of the variables. The reconstruction time thus increases dramatically as the number of sensors increases. 
It is quite computationally expensive to reconstruct sensors by PhaseCut, even though it achieves nice recovery performance. 
\begin{figure}[h]
 \vspace{-1.2cm}
  \centerline{\includegraphics[width=12cm]{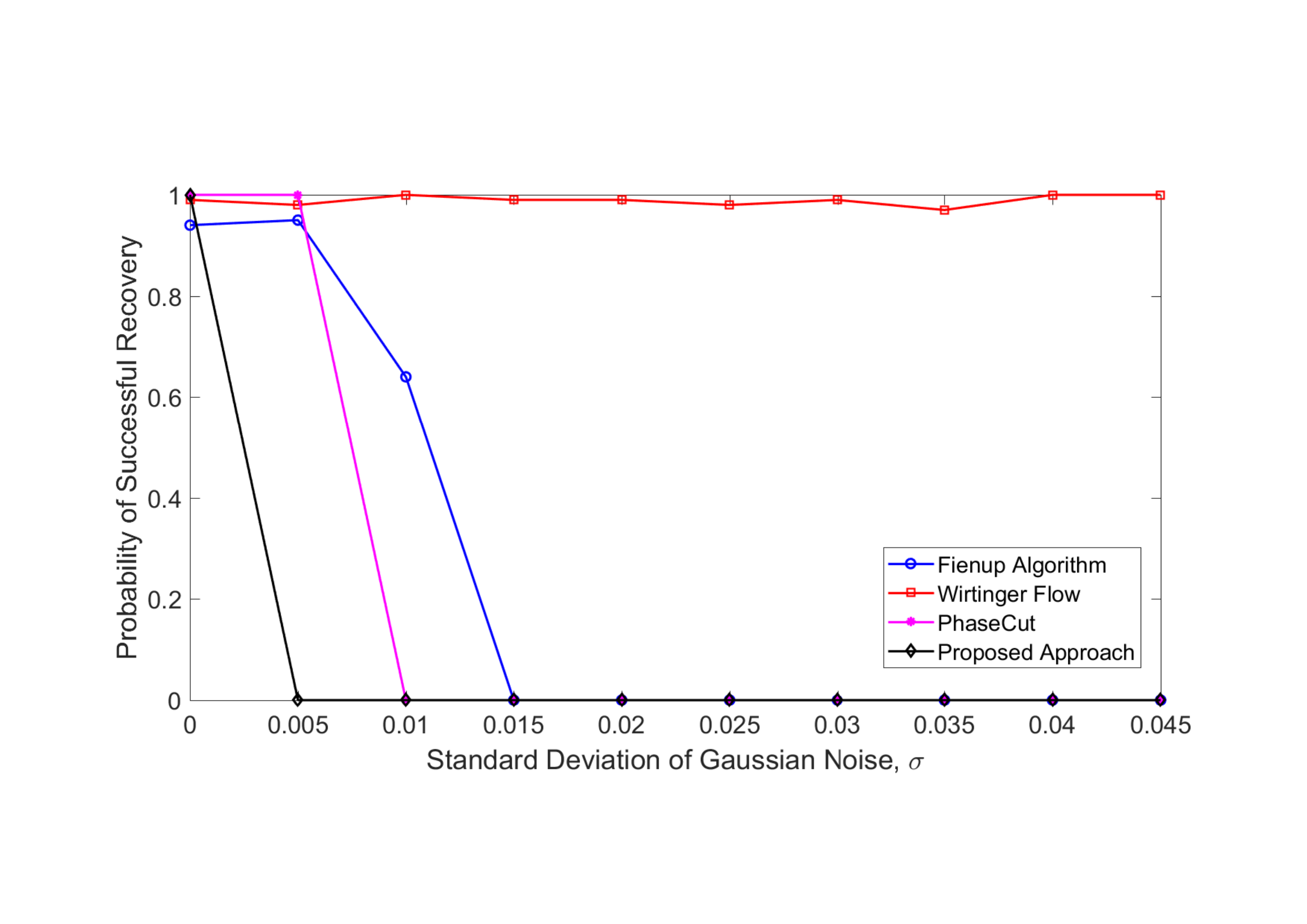}}
   \vspace{-1.2cm}
  \caption{Probability of successful recovery in noisy environment.}
	\label{fig:suc_prob_noisy}
\end{figure}
\begin{figure}[htb]
 \vspace{-1.2cm}
  \centerline{\includegraphics[width=12cm]{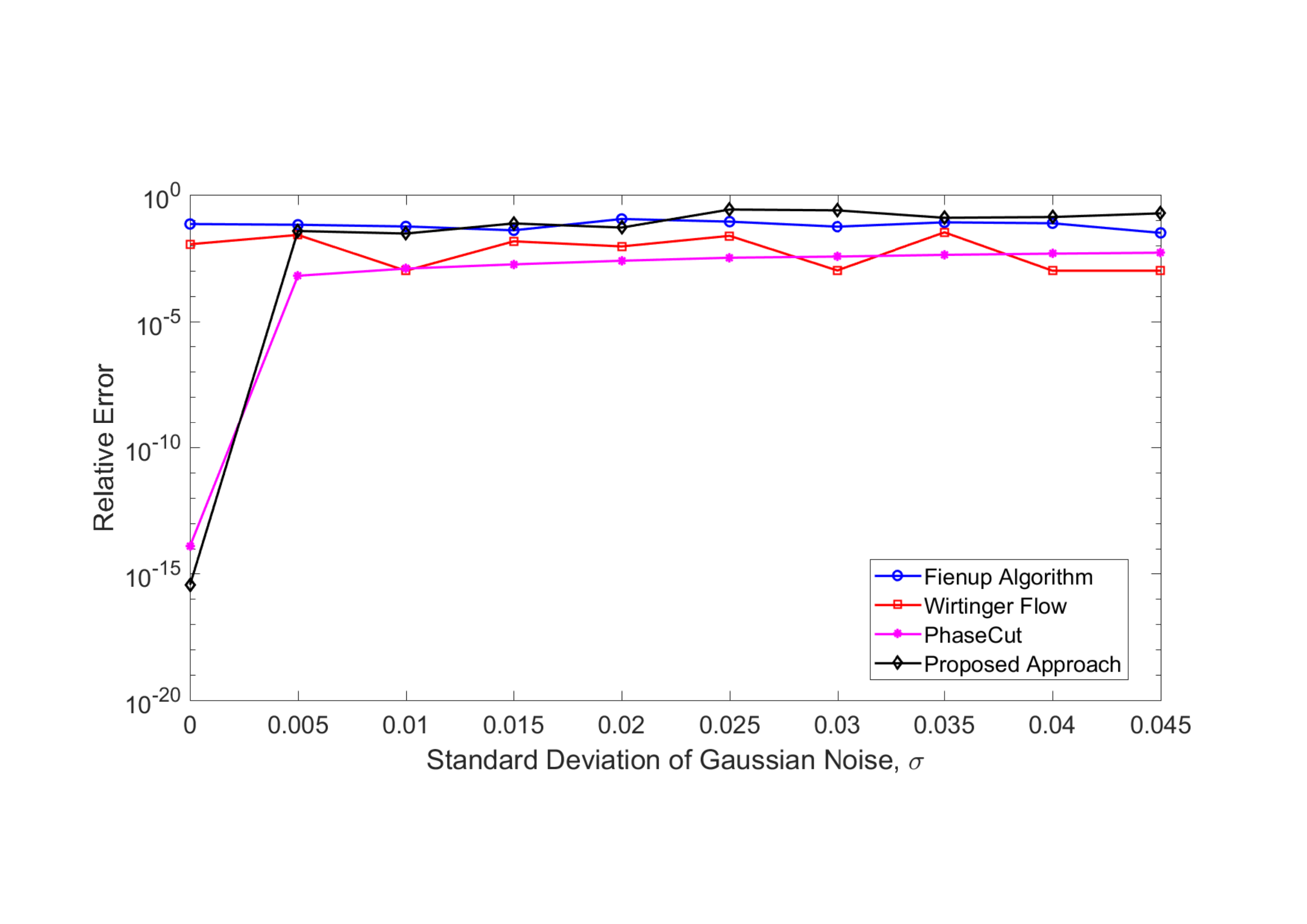}}
   \vspace{-1.2cm}
  \caption{Relative errors in noisy environment.}
	\label{fig:rel_err}
\end{figure}

In our experiments, we also test the robustness of our approach by adding noise to the measurements. Concretely, the measurement vector $b$ is of the form $b = |\mathcal{A}(x)|^2 + \epsilon(\sigma)$, 
where $\epsilon(\sigma) \in \mathbb{C}^m$ is a complex Gaussian white noise with standard deviation $\sigma$. The reconstruction error, $\delta$, is measured by the relative $\ell_2$-norm up to a complex phase:
\begin{equation*}
	\delta = \min_{\theta \in \mathbb{R} } {\frac{\lVert x - e^{i\theta}\hat{x} \rVert}{\lVert x \rVert}}.
\end{equation*}
We test the algorithms using different noise levels $\sigma \in \{0, 0.005, \ldots, 0.050\}$, where the number of sensors is set to be $100$. For each $\sigma$, we again use 100 realizations of the sensors. The probability of successful recovery and the relative errors in the noisy environment are reported in Figures \ref{fig:suc_prob_noisy} and~\ref{fig:rel_err}, respectively. From the plots, we find that our approach is less robust than other methods when the noise level exceeds $0.005$. The relative error of our approach is comparable with that of the Fienup Algorithm but higher than the other two methods.
The performance of our approach is not satisfactory in the noisy environment, as our approach is combinatorial in nature and hence the estimation error in each measurement tends to accumulate. A natural and interesting future direction is to robustify our approach while retaining its computational efficiency.

\section{Complex-Valued Phase Retrieval and Complex Rigidity Theory}\label{sec:CVPR}
In this section we will propose a new variant of the phase retrieval problem, which we call the \emph{complex-valued phase retrieval} (CVPR). We also discuss the connection between CVPR and complex rigidity theory (CRT)~\cite{gortler2014generic}.

\subsection{Complex Rigidity Theory}
We first briefly review the essential elements of complex rigidity theory. The definitions are taken from \cite{gortler2014generic}.
We equip the $d$-dimensional complex vector space $\mathbb{C}^d$ with the complex-valued distance $c(w,z)=\sum_{j=1}^d(w_j-z_j)^2$, where $w,z\in\mathbb{C}^d$. Note that the distance is a complex number in general. For a graph $G=(V,E)$, a \emph{configuration} of its vertices in $\mathbb{C}^d$ is a map $p:V\rightarrow\mathbb{C}^d$. The pair $(G,p)$ is called a \emph{framework}. Two frameworks $(G,p)$ and $(G,q)$ are said to be \emph{equivalent} if $c(p(i),p(j))=c(q(i),q(j))$ for all $(i,j)\in E$. Two configurations $p$ and $q$ are said to be \emph{congruent} if $c(p(i),p(j))=c(q(i),q(j))$ for all $i,j\in V$. A framework $(G,p)$ is said to be \emph{globally rigid} if for any framework $(G,q)$ equivalent to it, $p$ and $q$ are congruent. A configuration $p$ is said to be \emph{generic} if its coordinates do not satisfy any non-zero polynomial equation with rational coefficients (i.e., the coordinates of $p$ are \emph{algebraically independent}), and $(G,p)$ is called a \emph{generic framework}. A graph $G$ is said to be \emph{generically globally rigid} if all generic frameworks $(G,p)$ are globally rigid. It should be remarked that if we replace $\mathbb{C}$ by $\mathbb{R}$ above, then we are in the usual (real) rigidity theory setting. The following result by Gortler and Thurston [13] shows that when discussing the notion of generic global rigidity, there is no need to distinguish between the real and complex settings.
\begin{thm}\label{GGR}
A graph $G$ is generically globally rigid in $\mathbb{C}^d$ if and only if it is generically globally rigid in $\mathbb{R}^d$.
\end{thm}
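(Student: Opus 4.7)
The plan is to establish the equivalence through a base-field-independent algebraic characterization of generic global rigidity. For $\mathbb{F} \in \{\mathbb{R}, \mathbb{C}\}$, define the edge measurement map
\[
m_G^{\mathbb{F}} : \mathbb{F}^{d|V|} \to \mathbb{F}^{|E|}, \qquad p \mapsto \bigl( c(p(i),p(j)) \bigr)_{(i,j)\in E},
\]
where $c$ is the quadratic form $\sum_k (w_k - z_k)^2$. This map is polynomial with integer coefficients, and $(G,p)$ is globally rigid precisely when the fiber $(m_G^{\mathbb{F}})^{-1}(m_G^{\mathbb{F}}(p))$ consists of a single orbit under the relevant congruence group ($\mathrm{Euc}(d)$ in the real case, $O(d,\mathbb{C}) \ltimes \mathbb{C}^d$ in the complex case). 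Generic global rigidity asserts that this property holds for configurations whose coordinates are algebraically independent over $\mathbb{Q}$.

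Next, I would invoke the stress-matrix characterization of generic global rigidity (Connelly's sufficient condition together with the Gortler--Helfgott--Thurston necessary condition): for $|V| > d+1$, $G$ is generically globally rigid in $\mathbb{R}^d$ if and only if there exists a generic framework $(G,p)$ whose equilibrium stress matrix has maximum rank $|V|-d-1$. The crucial observation is that both the stress matrix itself and the rigidity matrix are constructed from the coordinates of $p$ by polynomials over $\mathbb{Q}$, and the rank of a matrix whose entries lie in $\mathbb{Q}(p)$ (the field of rational functions in algebraically independent coordinates) is a purely algebraic notion that does not depend on the ambient field in which the coordinates are evaluated. Consequently, the maximum-rank stress condition is intrinsic to the graph $G$.

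Thus the proof reduces to verifying that Connelly's sufficiency and the GHT necessity both transfer to the complex setting. Connelly's argument produces, from a maximum-rank stress, a certificate that the fiber of $m_G^{\mathbb{F}}$ is a single orbit; since this argument only uses linear algebra and the quadratic nature of $c$, it goes through verbatim over $\mathbb{C}$ (the complex-orthogonal group $O(d,\mathbb{C})$ plays the role of $O(d)$). The necessity direction, which extracts a maximum-rank stress from the assumption of generic global rigidity, is algebraic-geometric and applies to any algebraically closed or characteristic-zero field after base change.

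The main obstacle, as I see it, is the descent step: a generic real configuration is automatically a generic complex configuration (algebraic independence over $\mathbb{Q}$ is the same notion), so the $\mathbb{C} \Rightarrow \mathbb{R}$ direction is straightforward — if every complex generic fiber is a single complex orbit, then in particular its real points form a single real orbit. The delicate direction is $\mathbb{R} \Rightarrow \mathbb{C}$, where one must argue that the absence of spurious real solutions forces the absence of spurious complex solutions at a generic point. This is exactly what the stress-matrix detour accomplishes: the field-independent rank condition bridges the two settings, and the sufficiency argument then produces a single-orbit fiber over $\mathbb{C}$ whenever one exists over $\mathbb{R}$, completing the equivalence.
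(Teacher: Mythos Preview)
The paper does not actually prove this theorem. It is stated as a result of Gortler and Thurston and attributed to the reference \cite{gortler2014generic}; no argument is supplied in the present paper. So there is no ``paper's own proof'' to compare your proposal against.

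For what it is worth, your outline is broadly faithful to the strategy in the cited Gortler--Thurston work: the bridge between the real and complex settings is the field-independent equilibrium-stress-rank characterization (Connelly for sufficiency, Gortler--Healy--Thurston for necessity), and the observation that genericity over $\mathbb{Q}$ means the same thing whether the coordinates are viewed in $\mathbb{R}$ or in $\mathbb{C}$. One small point you gloss over in the $\mathbb{C}\Rightarrow\mathbb{R}$ direction: knowing that a real $q$ lies in the same \emph{complex} congruence orbit as a real $p$ does not by itself give a \emph{real} congruence between them; you still need the classical fact that two real configurations with identical pairwise squared distances are related by an element of $\mathrm{Euc}(d)$. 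With that added, the descent step is sound. The $\mathbb{R}\Rightarrow\mathbb{C}$ direction, as you acknowledge, is the substantive one, and ``the necessity direction \dots\ applies to any algebraically closed or characteristic-zero field after base change'' is doing a lot of work---making that precise is essentially the content of the Gortler--Thurston paper.
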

For more discussions on CRT, readers are referred to \cite{gortler2014generic}.
\subsection{Complex-Valued Phase Retrieval and Its Connection to Complex Rigidity Theory}
As pointed out in Section \ref{subsubsec:cplx}, the connection between the complex phase retrieval problem and rigidity theory breaks down because they are concerned with different notions of symmetry. 
In this section, we restore this connection via \emph{the complex-valued intensity map $\mathcal{B}$}, which bears an even stronger resemblance with $\mathcal{A}:\mathbb{R}^n/\{\pm 1\}\limto\mathbb{R}^M$ than $\mathcal{A}:\mathbb{C}^n/\sim\limto\mathbb{R}^M$.

Let $\Phi=\{\phi_m\}_{m=1}^M$ be a complex ensemble. Then, the map $\mathcal{B}=\mathcal{B}_\Phi:\mathbb{C}^n/\{\pm 1\}\limto\mathbb{C}^M$ given by $\left(\mathcal{B}(x)\right)_m=\langle x,\phi_m \rangle^2$ is called \emph{the complex-valued intensity map induced by $\Phi$}. This map was first considered in \cite[Lemma 6]{bandeira2014saving}, where the authors showed that the injectivity of $\mathcal{A}:\mathbb{C}^n/\sim\limto\mathbb{R}^M$ implies the injectivity of $\mathcal{B}$. Note that this map can be easily realized physically and hence might be of practical importance \cite{basu2006distributed,zhang2012localization}. Since each observation is a complex number, we call the recovery of $x$ from the measurement map $\mathcal{B}$ \emph{the complex-valued phase retrieval} (CVPR).

In the sequel, we will always consider graphs with vertex set $V=\{0,1,\dots,n\}$ and measurement vectors from the set $\{e_i\}_{i=1}^n\cup\{e_i-e_j\}_{1\leq i<j\leq n}$. Every ensemble $\Phi=\{\phi_m\}_{m=1}^M\subseteq \{e_i\}_{i=1}^n\cup\{e_i-e_j\}_{1\leq i<j\leq n}$ from this set of measurement vectors defines an edge set $E_\Phi=E_{1,\Phi}\cup E_{2,\Phi}$ on the vertex set $V$, where $(i,0)\in E_{1,\Phi}$ if and only if $e_i\in\Phi$ and $(i,j)\in E_{2,\Phi}$ if and only if $e_i-e_j\in \Phi$. This gives rise to a graph $G_\Phi=(V,E_\Phi)$. We will omit the subscript $\Phi$ from the above notations when it causes no ambiguities. For any $z\in\mathbb{C}^n$, we define \emph{the configuration of $V$ induced by $z$} to be the map $p_z:V\rightarrow \mathbb{C}$ given by $p_z(i)=z_i$, where $i=1,\dots,n$ and $p_z(0)=0$.

\begin{lem}\label{lemma1}
The following statements hold.
\begin{enumerate}
\item[(i)] A vector $x\in\mathbb{C}^n$ is generic if its induced configuration $p_x$ is generic.\label{lemma_i}
\item[(ii)] If $p$ is a generic configuration of $V$ on $\mathbb{C}$, then the vector $x$ defined by $x_i=p(i)-p(0)$ is generic.\label{lemma1_ii}
\end{enumerate}
\end{lem}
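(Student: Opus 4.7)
The plan is to dispatch both parts essentially by unpacking the definitions, with one linear-substitution argument doing the real work in (ii).

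For part (i), I would first observe that $p_x$ has $p_x(0)=0$ pinned by construction, so the non-trivial coordinates of $p_x$ are exactly the $2n$ real numbers $\Re(x_i),\Im(x_i)$ for $i=1,\ldots,n$. Reading ``$p_x$ is generic'' as algebraic independence over $\mathbb{Q}$ of these free coordinates---the only reading that keeps the statement non-vacuous, since $p_x(0)=0$ forbids strict Gortler--Thurston genericity---the condition coincides verbatim with the genericity of the vector $x$. So (i) is immediate from the definitions.

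For part (ii), I would argue by contrapositive. Assume some non-zero polynomial $Q\in\mathbb{Q}[u_1,v_1,\ldots,u_n,v_n]$ vanishes when evaluated at $(\Re(x_i),\Im(x_i))_{i=1}^n$, where $x_i=p(i)-p(0)$. Using $\Re(x_i)=\Re(p(i))-\Re(p(0))$ and $\Im(x_i)=\Im(p(i))-\Im(p(0))$, I would form the pullback
\[
\tilde Q(\alpha_0,\beta_0,\ldots,\alpha_n,\beta_n)\;:=\;Q(\alpha_1-\alpha_0,\beta_1-\beta_0,\ldots,\alpha_n-\alpha_0,\beta_n-\beta_0),
\]
which has rational coefficients and vanishes when evaluated at the $2(n+1)$ real coordinates of $p$. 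To conclude, I need $\tilde Q$ to be non-zero as a formal polynomial; this is the one step with any content, and it follows because specializing $\alpha_0=\beta_0=0$ in $\tilde Q$ recovers $Q$, which is non-zero by hypothesis. (Equivalently, the linear substitution $(\alpha_\bullet,\beta_\bullet)\mapsto(\alpha_\bullet-\alpha_0,\beta_\bullet-\beta_0)$ is surjective onto $\mathbb{R}^{2n}$, so its algebraic pullback is injective on polynomial rings.) Consequently $\tilde Q$ witnesses a non-trivial polynomial relation over $\mathbb{Q}$ among the coordinates of $p$, contradicting the genericity of $p$.

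The main conceptual obstacle is the mild definitional tension between the pinned configuration $p_x$ of part (i)---which is never generic in the strict sense because $p_x(0)=0$ satisfies the rational equation $\alpha_0=0$---and the unpinned configuration $p$ of part (ii), whose $2(n+1)$ coordinates are taken to be algebraically independent. Once one agrees to read ``$p_x$ generic'' as genericity of its free coordinates only, both halves of the lemma reduce to the change-of-variables observation above; no rigidity-theoretic input is required.
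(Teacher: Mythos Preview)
Your argument is correct and follows the same skeleton as the paper's (both parts by contrapositive, pulling back a witnessing polynomial through the affine substitution), but two execution differences are worth flagging. First, you split into the $2n$ real coordinates $(\Re x_i,\Im x_i)$, whereas the paper works directly with the $n$ complex coordinates as an algebraically independent family over $\mathbb{Q}$; under that convention the paper's proof of (i) does not reinterpret genericity on the free coordinates as you do, but instead \emph{exploits} $p_x(0)=0$ by taking the witness $f(z_1,\ldots,z_n)$ and forming $\hat f(z_0,\ldots,z_n)=z_0\,f(z_1,\ldots,z_n)$, which vanishes at $(p_x(0),\ldots,p_x(n))$ precisely because the zeroth coordinate is $0$. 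Second, for the non-vanishing step in (ii), your specialization $\alpha_0=\beta_0=0$ recovering $Q$ is crisper than the paper's bare assertion that ``a sum of terms of this form is again a non-zero polynomial in $p(0),\ldots,p(n)$,'' a claim stated there without justification and most cleanly verified by exactly your trick. Your diagnosis of the definitional tension in (i) is also apt: under the strict reading, the hypothesis ``$p_x$ generic'' is never met, so the paper's statement of (i) is technically vacuous---its proof remains valid, but your reformulation is the one that carries content.
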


\begin{proof}
We first prove (i). Suppose that $x$ is not generic. Then, there exists a non-zero $n$-variate polynomial $f$ with rational coefficients such that $f(x)=0$. Consider the $(n+1)$-variate polynomial $\hat{f}$ defined by $\hat{f}(z_0,z_1,\dots,z_n)=z_0f(z_1,\dots,z_n)$. Then, the vector $z=(p_x(0),p_x(1),\dots,p_x(n))$ satisfies $\hat{f}(z)=0$. Since the coefficients of $\hat{f}$ are also rational, $p_x$ is not generic.

Next, we prove (ii). Suppose that $x$ is not generic. Then, there exists a non-zero $n$-variate polynomial $f$ with rational coefficients such that $f(p(1)-p(0),\dots,p(n)-p(0))=0$. Consider one term $c\prod_{j=1}^n\left(p(j)-p(0) \right)^{r_j}$ in this polynomial, where $c\in\mathbb{Q}\backslash\{0\}$, $r_j\in\mathbb{Z}_{\geq 0}$, and $j=1,\dots,n$. It is easy to see that each such term is a non-zero polynomial in $p(0),p(1),\dots,p(n)$ with rational coefficients. Since $f\not\equiv 0$, there exists at least one such term. Moveover, a sum of terms of this form is again a non-zero polynomial in $p(0),p(1),\dots,p(n)$ with rational coefficients. Hence, $p(0),p(1),\dots,p(n)$ are algebraically dependent and thus $p$ is not generic.\qed
\end{proof}
 
The following theorem establishes the connection between CVPR and CRT.
\begin{thm}\label{mainthm}
For any ensemble $\Phi$, $\mathcal{B}$ is injective if and only if $(G, p)$ is globally rigid for any configuration $p$, where $G=G_\Phi=(V,E_\Phi)$ is defined above.
\end{thm}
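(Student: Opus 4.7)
The proof proceeds by translating between the measurement-theoretic and framework-theoretic formalisms through a precise dictionary. The dictionary rests on a direct calculation: for $\phi_m = e_i$ we have $(\mathcal{B}(z))_m = z_i^2 = c(p_z(i), p_z(0))$, and for $\phi_m = e_i - e_j$ we have $(\mathcal{B}(z))_m = (z_i - z_j)^2 = c(p_z(i), p_z(j))$. Hence $\mathcal{B}(x) = \mathcal{B}(y)$ holds if and only if the frameworks $(G, p_x)$ and $(G, p_y)$ are equivalent in the CRT sense.

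For the forward direction, assume $\mathcal{B}$ is injective and take any framework $(G, p)$ with $(G, q)$ equivalent to it. Set $x_i = p(i) - p(0)$ and $y_i = q(i) - q(0)$. Since the complex distance $c$ is invariant under translation, $(G, p_x)$ and $(G, p_y)$ remain equivalent, and the dictionary yields $\mathcal{B}(x) = \mathcal{B}(y)$. Injectivity gives $x = \pm y$, which back-translates to either $p = q + (p(0) - q(0))$ or $p = -q + (p(0) + q(0))$. Both are of the form $z \mapsto \pm z + \lambda$ and preserve $c(\cdot, \cdot)$, so $p$ and $q$ are congruent, proving $(G, p)$ is globally rigid.

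For the converse, suppose global rigidity holds for every configuration, and let $\mathcal{B}(x) = \mathcal{B}(y)$. Then $(G, p_x)$ and $(G, p_y)$ are equivalent, hence congruent, which reads $(x_i - x_j)^2 = (y_i - y_j)^2$ for all $i, j \in V$, with $x_0 = y_0 = 0$. The only nontrivial step is a 1D algebraic lemma: if we set $a_i = x_i - y_i$ and $b_i = x_i + y_i$, then $(a_i - a_j)(b_i - b_j) = 0$ for every pair, and this forces either all $a_i$ to coincide or all $b_i$ to coincide. A short combinatorial argument rules out the mixed case: from indices $(i_1, i_2)$ witnessing $a_{i_1} \ne a_{i_2}$ (hence $b_{i_1} = b_{i_2}$) and $(i_3, i_4)$ witnessing $b_{i_3} \ne b_{i_4}$ (hence $a_{i_3} = a_{i_4}$), examining the four cross-pairs $(i_\alpha, i_\beta)$ with $\alpha \in \{1,2\}, \beta \in \{3,4\}$ quickly produces a contradiction in each of the four sub-cases. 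Applying the anchor condition $a_0 = b_0 = 0$ then pins the additive constant to zero, so $x = \pm y$ and $\mathcal{B}$ is injective.

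The main obstacle is this last algebraic lemma; everything else is a routine translation through the dictionary. It is worth noting that the Gortler--Thurston theorem is not invoked here, because our hypothesis ``globally rigid for every configuration'' is strictly stronger than generic global rigidity, and the converse direction must handle potentially non-generic configurations (e.g., when several coordinates of $x$ happen to coincide).
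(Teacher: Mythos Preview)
Your proof is correct and follows the same overall architecture as the paper: both directions are obtained by the dictionary $\mathcal{B}(z)_m \leftrightarrow c(p_z(i),p_z(j))$, the forward direction reduces to observing that $x=\pm y$ yields a congruence, and the converse reduces to an algebraic lemma showing that congruence of $p_x$ and $p_y$ forces $x=\pm y$.

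The only substantive difference is in how that final lemma is dispatched. You change variables to $a_i=x_i-y_i$, $b_i=x_i+y_i$, obtain $(a_i-a_j)(b_i-b_j)=0$ for all pairs, and then run a short but genuine case analysis to conclude that either the $a_i$ are all equal or the $b_i$ are all equal. The paper instead expands $(x_i-x_j)^2=(y_i-y_j)^2$ together with $x_i^2=y_i^2$ to get $x_ix_j=y_iy_j$, whence $x_i/y_i=x_j/y_j$ for all nonzero entries; since each ratio already squares to $1$, the common value is $\pm 1$ and $x=\pm y$. The paper's route is a couple of lines shorter and avoids the combinatorial case split, while your factorisation makes the structure of the obstruction (an ``either/or'' dichotomy) more transparent and does not need to separate the zero entries as a special case. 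Either argument is entirely adequate here.
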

\begin{proof}
Suppose that $\mathcal{B}$ is injective; i.e., $\mathcal{B}(x)=\mathcal{B}(y)$ implies that $x=\pm y$. Let $(G,p)$ and $(G,q)$ be two equivalent frameworks. By definition, $c(p(i),p(j))=c(q(i),q(j))$ for all $(i,j)\in E$. Define $x,y\in\mathbb{C}^n$ by $x_i=p(i)-p(0)$ and $y_i=q(i)-q(0)$ for $i=1,\dots,n$. Then, for any $(i,0)\in E_1$,
\begin{align*}
\langle e_i,x\rangle^2 &= x_i^2=(p(i)-p(0))^2=c(p(i),p(0))\\
&=c(q(i),q(0))=(q(i)-q(0))^2=y_i^2=\langle e_i,y\rangle^2,
\end{align*} and for any $(i,j)\in E_2$,
\begin{align*}
\langle e_i-e_j,x\rangle^2 &= (x_i-x_j)^2=(p(i)-p(j))^2=c(p(i),p(j))\\
&=c(q(i),q(j))=(q(i)-q(j))^2=(y_i-y_j)^2=\langle e_i-e_j,y\rangle^2.
\end{align*}
This is exactly $\mathcal{B}(x)=\mathcal{B}(y)$. Hence, $x=\pm y$ and
\begin{equation}\label{system1}
\begin{cases}
x_i^2=y_i^2,~i=1,\dots,n,\\
(x_i-x_j)^2=(y_i-y_j)^2,~i,j=1,\dots,n.
\end{cases}
\end{equation}
Thus, we have $c(p(i),p(j))=c(q(i),q(j))$ for all $(i,j)\in V$ and $p$ is congruent to $q$.

Now, suppose that $(G,p)$ is globally rigid for any configuration $p$. Let $x,y\in\mathbb{C}^n$ be such that $\mathcal{B}(x)=\mathcal{B}(y)$; i.e., $\langle x,\phi_m\rangle^2=\langle y,\phi_m\rangle^2$ for $m=1,\dots,M$. By the choice of $\{\phi_m\}_{m=1}^M$, 
\begin{align*}
& \begin{cases}
\langle e_i,x\rangle^2=\langle e_i,y\rangle^2,~i=1,\dots,n,\\
\langle e_i-e_j,x\rangle^2=\langle e_i-e_j,y\rangle^2,~i,j=1,\dots,n
\end{cases}\hspace{-3mm} \\
\Longleftrightarrow
& \begin{cases}
c(x_i,0)=c(y_i,0),~(i,0)\in E_1,\\
c(x_i,x_j)=c(y_i,y_j),~(i,j)\in E_2
\end{cases}.
\end{align*}
Therefore, $c(p_x(i),p_x(j))=c(p_y(i),p_y(j))$ for all $(i,j)\in E$; i.e., $(G,p_x)$ and $(G,p_y)$ are equivalent. By the supposition, they are also both globally rigid, and hence $p_x$ and $p_y$ are congruent. In particular, we have $c(p_x(i),p_x(j))=c(p_y(i),p_y(j))$ for all $i,j=1,\dots,n$, and the system \eqref{system1} is satisfied. From the first equation of \eqref{system1}, we know that $x_i=0$ if and only if $y_i=0$. For $x_i,x_j\neq 0$, we have $y_i,y_j\neq 0$ and $x_i/y_i=x_j/y_j$. Thus, $x_i/y_i$ is a complex constant $re^{i\psi}$ for those non-zero entries. Again from the first equation of \eqref{system1}, we know that $r=1$ and $2\psi=0\mod 2\pi$ since $\arg(x_i^2)=\arg(y_i^2)$. Thus, $\psi=0$ or $\psi=\pi$ and $x=\pm y$.\qed
\end{proof}

We say that a point \emph{$[x]\in\mathbb{C}^n/\{\pm 1\}$ is generic} if $x$ is generic. Since $x$ is generic if and only if $-x$ is generic, the above notion is well-defined. The following theorem should be compared with \cite[Theorem 2.9 and Theorem 3.4]{balan2006signal}; see also \cite[Section 3]{fickus2014phase}.
\begin{thm}
If $\mathcal{B}$ is injective on the set of generic points $[x]\in\mathbb{C}^n/\{\pm 1\}$ (i.e., for any generic $x$, $\mathcal{B}^{-1}\left(\mathcal{B}(x) \right)$ is the singleton $[x]=\{\pm x\}\in\mathbb{C}^n/\{\pm1 \}$), then $G$ is generically globally rigid on $\mathbb{C}$, and hence also on $\mathbb{R}$.
\end{thm}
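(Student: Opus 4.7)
The proof will proceed by directly unpacking definitions and invoking Lemma~\ref{lemma1}(ii), the hypothesis, and finally Theorem~\ref{GGR}. Let me outline the steps.

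First, fix any generic framework $(G,p)$ in $\mathbb{C}$; I need to show it is globally rigid. Let $(G,q)$ be any framework equivalent to $(G,p)$. The first move is to reduce the problem to vectors in $\mathbb{C}^n$ on which $\mathcal{B}$ acts. Define $x, y \in \mathbb{C}^n$ by $x_i = p(i) - p(0)$ and $y_i = q(i) - q(0)$ for $i = 1, \ldots, n$. By Lemma~\ref{lemma1}(ii), the genericity of $p$ implies that $x$ is generic; hence $[x] \in \mathbb{C}^n/\{\pm 1\}$ is a generic point in the sense of the theorem.

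Second, I translate the edge-wise equality of complex distances $c(p(i), p(j)) = c(q(i), q(j))$ (for $(i,j) \in E$) into the equality $\mathcal{B}(x) = \mathcal{B}(y)$. This is essentially the same computation as in the proof of Theorem~\ref{mainthm}: for $(i,0) \in E_1$ one has $\langle e_i, x\rangle^2 = x_i^2 = (p(i)-p(0))^2 = c(p(i),p(0)) = c(q(i),q(0)) = y_i^2 = \langle e_i, y\rangle^2$, and for $(i,j)\in E_2$ the analogous identity $\langle e_i - e_j, x\rangle^2 = \langle e_i - e_j, y\rangle^2$ holds. Hence $\mathcal{B}(x) = \mathcal{B}(y)$. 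Invoking the injectivity hypothesis of $\mathcal{B}$ on the generic point $[x]$, I conclude $y = \pm x$.

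Third, I lift this back to congruence of the configurations $p$ and $q$. In either case $y = x$ or $y = -x$, the differences satisfy $(q(i)-q(j))^2 = (p(i)-p(j))^2$ for all $i, j \in V$ (including $j=0$, using $p(0)-p(0) = 0 = q(0)-q(0)$), which is precisely $c(p(i), p(j)) = c(q(i), q(j))$ for all $i, j \in V$. Thus $p$ and $q$ are congruent, so $(G,p)$ is globally rigid. Since $p$ was an arbitrary generic configuration, $G$ is generically globally rigid over $\mathbb{C}$. Finally, applying Theorem~\ref{GGR} transfers generic global rigidity from $\mathbb{C}$ to $\mathbb{R}$, completing the proof.

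I do not anticipate a substantial obstacle: the argument is essentially a reprise of the direction of Theorem~\ref{mainthm} in which injectivity implies global rigidity, but restricted to generic data. The only subtle point is ensuring that the vector $x = (p(1)-p(0), \ldots, p(n)-p(0))$ inherits genericity from the configuration $p$, which is precisely what Lemma~\ref{lemma1}(ii) supplies; without it, the injectivity hypothesis (which is only stated on generic inputs) could not be invoked.
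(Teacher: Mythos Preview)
Your proposal is correct and follows essentially the same approach as the paper's own proof, which simply states that the result follows by combining Theorem~\ref{mainthm}, Lemma~\ref{lemma1}(ii), and Theorem~\ref{GGR}. You have merely spelled out what that combination entails: reprising the ``injectivity $\Rightarrow$ global rigidity'' direction of Theorem~\ref{mainthm} restricted to generic configurations, with Lemma~\ref{lemma1}(ii) supplying the genericity of $x$ needed to invoke the hypothesis, and Theorem~\ref{GGR} transferring the conclusion to $\mathbb{R}$.
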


\begin{proof}
Combining Theorem \ref{mainthm}, Lemma \ref{lemma1}(ii), and Theorem \ref{GGR}, the result follows immediately. \qed
\end{proof}

\section{Conclusion}\label{sec:cln}
In this paper, we studied the phase retrieval problem from a fresh perspective and connected it to the well-studied problem of sensor network localization. Based on this connection, we develop a rigidity-theoretic two-stage algorithm for phase retrieval that provably recovers the true signal using only $3n-2$ intensity measurements. Besides, our algorithm is efficient (its computational complexity is $\Theta(n)$) and easy to implement. Finally, we proposed a new variant of the phase retrieval problem and discuss its connection to complex rigidity theory. Adapting our approach to Fourier settings and extending our algorithm to the noisy case are definitely important topics for future research.

\newpage
\bibliographystyle{spmpsci} 
\bibliography{ref_PR}


\end{document}